\newtheorem{maintheorem}{Theorem}
\newtheorem{theorem}{Theorem}[section]
\newtheorem{lemma}[theorem]{Lemma}
\newtheorem{proposition}[theorem]{Proposition}
\newtheorem{remark}{Remark}
\def\XXint#1#2#3{{\setbox0=\hbox{$#1{#2#3}{\int}$ }
\vcenter{\hbox{$#2#3$ }}\kern-.6\wd0}}
\newcommand{\defeq}{\vcentcolon=}
\newcommand{\E}{\mathbb{E}}
\renewcommand{\P}{\mathbb{P}}
\newcommand{\Var}{\mathrm{Var}}
\newcommand{\cd}{\mathcal{D}}
\newcommand{\cg}{\mathcal{G}}
\newcommand{\N}{\mathbb{N}}
\newcommand{\cf}{\mathcal{F}}
\begin{document}
\title{Limit Theorems for Longest Monotone Subsequences in Random Mallows Permutations}

\author{Riddhipratim Basu
\thanks{Dept. of Mathematics, Stanford University. Email: rbasu@stanford.edu}
\and
Nayantara Bhatnagar
\thanks{Dept. of Mathematical Sciences, University of Delaware. Email: naya@math.udel.edu}
}
\date{}
\maketitle
\begin{abstract}
We study the lengths of monotone subsequences for permutations drawn from the Mallows measure. The Mallows measure was introduced by Mallows in connection with ranking problems in statistics. Under this measure, the probability of a permutation $\pi$ is proportional to $q^{{\rm inv}(\pi)}$ where $q$ is a positive parameter and ${\rm inv}(\pi)$ is the number of inversions in $\pi$.

In our main result we show that when $0<q<1$, then the limiting distribution of the longest increasing subsequence (LIS) is Gaussian, answering an open question in \cite{BhaPel15}. This is in contrast to the case when $q=1$ where the limiting distribution of the LIS when scaled appropriately is the GUE Tracy-Widom distribution. We also obtain a law of large numbers for the length of the longest decreasing subsequence (LDS) and identify the limiting constant, answering a further open question in \cite{BhaPel15}.
\end{abstract}

\section{Introduction}
\label{s:intro}
Random permutations are well-studied objects in combinatorics and probability. Whereas different statistics of a uniform random permutation have been extensively studied, some non-uniform measures on permutations have, in recent years, generated attention as well. Among the non-uniform models of permutations, the following exponential family of distributions on $\mathcal S_n$, the set of permutations on $[n]:=\{1,2,\ldots ,n\}$, introduced by Mallows \cite{Mal57}, has been one of the popular choices. Let $q$ be a positive parameter. A random permutation $\Pi=\Pi_{n,q}$ on $\mathcal{S}_{n}$ is said to be drawn from the ${\rm Mallows}(q)$ measure if
\[
\P(\Pi = \pi) = \frac{q^{{\rm inv(\pi)}}}{Z_{n,q}}
\]
for every permutation $\pi$ of $[n]$ where
\[
{\rm inv}(\pi) \defeq \# \{ (i,j) \ : \ 1 \le i < j \le n, \pi(i) > \pi(j)       \}
\]
is the number of inversions in $\pi$ and $Z_{n,q}$ is a normalizing constant. Observe that for $q=1$, this reduces to the uniform measure on permutations whereas for $q>1$ and $q<1$, permutations with more and less inversions are favored respectively. 

In this paper, we study the lengths of longest monotone subsequence of a ${\rm Mallows}(q)$ permutation. This is a classically studied question for uniform permutations and the study for general Mallows measures was initiated following a question raised by Borodin et al. in \cite{BorDiaFul10} and has received significant attention of late \cite{MueSta11, BhaPel15}; see Section \ref{s:background} for more details. For a permutation $\pi$ in $\mathcal S_n$, we say that $1\leq i_1< i_2<\cdots < i_k$ is an {\bf increasing subsequence} of $\pi$ with {\bf length} $k$ if $\pi(i_1)<\pi(i_2)<\cdots <\pi(i_k)$. An increasing subsequence of maximum length is called a {\bf longest increasing subsequence}(LIS). Define the length of any such subsequence as the {\bf length of a longest increasing subsequence} of $\pi$. Analogously, we say that $1\leq i_1< i_2<\cdots < i_k$ is a {\bf decreasing subsequence} of $\pi$ with {\bf length} $k$ if $\pi(i_1) > \pi(i_2) > \cdots > \pi(i_k)$. A decreasing subsequence of maximum length is called a {\bf longest decreasing subsequence}(LDS) and the common length of such subsequences is defined to be the {\textbf{length of a longest decreasing subsequence}} of $\pi$. For a random $\mbox{Mallows}(q)$ permutation $\Pi$ as defined above, let $L_n=L_n(q)$ and $L^{\downarrow}_n=L^{\downarrow}_n(q)$ denote the length of a longest increasing subsequence and the length of a longest decreasing subsequence of $\Pi$ respectively. 

Weak laws of large numbers have been established for $L_n(q)$ for different ranges of $q=q(n)$. For $q=1$, it is a classical result that $L_n$ scales as $2\sqrt{n}$. It is also not hard to see via a subadditive argument (see e.g.\ \cite{BhaPel15}) that $L_n$ scales linearly with $n$ when $q<1$. The growth rate of $L_n$ as well the limiting constant has been identified in intermediate regimes in \cite{MueSta11,BhaPel15}. However, so far, scaling limits for $L_n$ have not been established in any case except when $q=1$. In this paper we consider $q\in (0,1)$ fixed. Our main result is a central limit theorem for $L_n(q)$, which is the first result identifying the limiting distribution for the Mallows model in a case when $q \ne 1$. We prove that $L_n$ is asymptotically Gaussian with linear variance, confirming a conjecture in \cite{BhaPel15}. This provides an instance of a phase transition in the scaling limit of the length of LIS between Gaussian and GUE Tracy-Widom distribution as $q$ varies; see Section \ref{s:background} for more details. 

\begin{maintheorem}
\label{t:clt}
Fix $0<q<1$. Then exist constants $\sigma=\sigma(q)>0$ and $a = a(q)>0$ such that for $L_n=L_n(q)$ defined as above, we have
$$ \frac{L_n-an}{\sigma\sqrt{n}} \Rightarrow \mathcal{N}(0,1)$$
as $n\rightarrow \infty$ where $\Rightarrow$ denotes convergence in distribution and $\mathcal{N}(0,1)$ denotes the standard Normal distribution with mean 0 and variance 1.
\end{maintheorem}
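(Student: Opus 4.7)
The strategy is to exploit the exponential decay of correlations in $\mathrm{Mallows}(q)$ with $0<q<1$ to decompose $\Pi$ into asymptotically independent blocks and then invoke a renewal CLT. Call an index $k \in \{1,\ldots,n-1\}$ a \emph{separation point} if $\{\Pi(1),\ldots,\Pi(k)\} = \{1,\ldots,k\}$; equivalently, no inversion of $\Pi$ crosses $k$. Writing $Z_n = \prod_{i=1}^n (1-q^i)/(1-q)$, the additivity $\mathrm{inv}(\pi) = \mathrm{inv}(\pi|_{[1,k]}) + \mathrm{inv}(\pi|_{[k+1,n]})$ for such $k$ gives
\[
\P(k \text{ is a separation point}) \;=\; \frac{Z_k Z_{n-k}}{Z_n} \;\xrightarrow[k,\,n-k \to \infty]{}\; \prod_{i=1}^\infty (1-q^i) \;=:\; \rho(q) \;>\; 0,
\]
so separation points occur at positive asymptotic density. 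Conditional on the full set $0 = k_0 < k_1 < \cdots < k_m < k_{m+1} = n$ of separation points, the block restrictions $\Pi|_{B_i}$ with $B_i = (k_{i-1}, k_i]$ are \emph{independent}, each a $\mathrm{Mallows}(q)$ permutation on $\ell_i := |B_i|$ elements conditioned on having no internal separation point. An analogous computation shows that $\P(\text{no separation point in } [j, j+m])$ decays exponentially in $m$, so the inter-separation gaps have exponentially decaying tails.

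Since the value range of $B_i$ lies strictly below that of $B_{i+1}$, any increasing subsequence of $\Pi$ decomposes as a concatenation of increasing subsequences of the individual blocks, yielding $L_n = \sum_{i=1}^{m+1} X_i$, where $X_i$ is the LIS length of $\Pi|_{B_i}$. The bulk pairs $(\ell_i, X_i)$ are i.i.d.\ with all moments (since $X_i \le \ell_i$ and $\ell_i$ has exponential tails), and the two boundary blocks contribute $O_{\P}(1)$, negligible on scale $\sqrt n$. Setting $a = \E[X_1]/\E[\ell_1]$, the classical renewal-reward CLT then gives
\[
\frac{L_n - an}{\sqrt{n}} \;\Rightarrow\; \mathcal{N}(0,\,\sigma^2),\qquad \sigma^2 \;=\; \frac{\Var(X_1 - a\,\ell_1)}{\E[\ell_1]}.
\]
To verify non-degeneracy $\sigma^2 > 0$, it suffices to exhibit two block configurations of equal length, each with positive conditional probability but with different LIS lengths; comparing a block that is the identity on its central positions to one with a single interior descent suffices.

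The main technical obstacle is the regeneration step: proving exponential decay for the probability of no separation point in an interval of length $m$. This reduces to bounding the total Mallows weight of permutations on $[m]$ that fail the separation condition at every interior index, i.e.\ that contain a long-range crossing at every scale. One route is via the one-line construction of $\mathrm{Mallows}(q)$ through truncated geometric displacements, which confines typical $|\Pi(i)-i|$ to $O(1/(1-q))$; another is via coupling with the infinite $\mathrm{Mallows}(q)$ process on $\N$, where a stationary regenerative decomposition is available. Once this exponential tail is in hand, the conditional independence of the blocks follows directly from the Mallows weight splitting, and the remainder of the argument is a standard application of renewal theory together with the elementary non-degeneracy check.
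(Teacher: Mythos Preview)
Your strategy is the same as the paper's: decompose $\Pi$ at the separation points $k$ where $\Pi([k])=[k]$, so that $L_n$ splits additively over the resulting blocks, and then invoke a renewal/Anscombe CLT. The paper implements this by first embedding $\Pi_n$ inside an \emph{infinite} Mallows permutation $\widetilde\Pi$ on $\N$ (built from i.i.d.\ truncated-free geometrics), for which the regeneration times form a genuine renewal process with i.i.d.\ gaps from the outset; this avoids the finite-$n$ boundary and near-i.i.d.\ issues you gesture at. Your direct finite-$n$ formulation has the pleasant feature that $L_n=\sum_i X_i$ holds exactly, but the price is that the block lengths are not literally i.i.d.\ (they depend on $n$ through $Z_{n-k}/Z_n$), so ``the bulk pairs $(\ell_i,X_i)$ are i.i.d.'' is only true after passing to the infinite model, which is exactly the coupling you mention at the end.

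There are two genuine gaps. The more serious one is the sentence ``an analogous computation shows that $\P(\text{no separation point in }[j,j+m])$ decays exponentially in $m$.'' This is \emph{not} analogous to the marginal computation $\P(k\text{ is a separation point})\to\rho(q)$: the events $\{k\text{ is not a separation point}\}$ are strongly positively correlated, and no amount of manipulating $Z_kZ_{n-k}/Z_n$ yields the tail bound. This is precisely the technical heart of the paper (its Section~4): one introduces the Markov chain $M_t=\max_{j\le t}\widetilde\Pi(j)-t$ on $\{0,1,2,\ldots\}$, identifies the first gap $T_1$ with the first return time $R_0^+$ of $M$ to $0$, computes the stationary distribution of $M$ explicitly, and then uses Kac's formula to obtain $\E_0 R_0^+<\infty$ and $\E_0 (R_0^+)^2<\infty$ (and, with further work, exponential tails). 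You correctly flag this as the ``main technical obstacle'' in your final paragraph, but earlier you present it as already settled; it is not, and neither of the two routes you sketch is carried out.

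The second gap is the non-degeneracy check. A block is by definition a Mallows permutation \emph{conditioned to have no internal separation point}, so a block ``that is the identity on its central positions'' is not admissible for any length $\ge 2$. The fix, which is what the paper does, is to look at length $3$: the indecomposable permutations $321$ (LIS $=1$) and $231$ or $312$ (LIS $=2$) both have positive probability, so $\Var(X_1\mid \ell_1=3)>0$, which already forces $\Var(X_1-a\ell_1)>0$.
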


We do not have explicit formulae for $a(q)$ and $\sigma(q)$ in the above theorem. However, in Section \ref{s:anscombe}, we derive representations of $a$ and $\sigma$ as certain statistics of a measure on the space of permutations with variable length. Observe that $a=\lim_{n\to \infty} \frac{\E L_n}{n}$, where the existence of the limit is guaranteed by a subadditive argument. Bounds on $a$ were derived in \cite{BhaPel15}, but evaluating this constant was left as one of the open problems there.

Our second main result is a law of large numbers for $L_n^{\downarrow}(q)$ for $0<q<1$. The order of the rate of growth of $L_n^{\downarrow}(q)$ was found in \cite{BhaPel15} and the identification of the limiting constant was left as an open question, which we answer in the following theorem. 

\begin{maintheorem}
\label{t:lln}
Fix $0<q<1$. For $L^{\downarrow}_n=L^{\downarrow}_n(q)$ defined as above, we have
$$\frac{L^{\downarrow}_n\sqrt{\log q^{-1}}}{\sqrt{2 \log n}} \rightarrow 1$$
in probability as $n\rightarrow \infty$.
\end{maintheorem}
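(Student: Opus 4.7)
The plan is to establish matching upper and lower bounds on $L_n^\downarrow$, both exploiting the Lehmer code representation of Mallows$(q)$: the statistics $J_i := \#\{j > i : \pi(j) < \pi(i)\}$ are mutually independent and each $J_i$ is distributed as $\mathrm{Geom}(1-q)$ truncated to $\{0,1,\dots,n-i\}$, since $\mathrm{inv}(\pi) = \sum_i J_i$ under this bijection. A standard consequence is the bounded-displacement estimate $\P(|\pi(i) - i| > t) \leq C q^{ct}$ for constants $c, C > 0$, which follows from the identity $\pi(i) - i = J_i - J_i'$ (where $J_i' := \#\{j < i : \pi(j) > \pi(i)\}$ is the reverse Lehmer code statistic, also truncated-geometric).

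For the upper bound I would use a first moment argument to show $\P(L_n^\downarrow \geq k) \to 0$ for $k = (1+\varepsilon)\sqrt{2\log n/\log q^{-1}}$. The central probabilistic estimate is
\[
\P\bigl(\pi(i_1) > \pi(i_2) > \cdots > \pi(i_k)\bigr) \leq q^{\binom{k}{2}}
\]
for any fixed positions $i_1 < \cdots < i_k$, which I would prove via the reversal involution $\pi \mapsto \pi'$ defined by $\pi'(i_a) = \pi(i_{k-a+1})$ for $a = 1,\dots,k$ and $\pi'(j) = \pi(j)$ otherwise. The $\binom{k}{2}$ within-$S$ inversions of $\pi$ (where $S = \{i_1,\dots,i_k\}$) are all destroyed, while a case analysis in $L(j) := \#\{a : i_a < j\}$ and $a^*(j) := \#\{a : \pi(i_a) > \pi(j)\}$ for each $j \in (i_1, i_k) \setminus S$ shows that the cross inversions between $j$ and $S$ change by $-2\min(L(j), k-L(j), a^*(j), k-a^*(j)) \leq 0$. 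Hence $\mathrm{inv}(\pi) \geq \mathrm{inv}(\pi') + \binom{k}{2}$, and the probability bound follows by summing Mallows weights. Combining with the displacement estimate (which by a union bound forces $\max_i |\pi(i)-i| \leq D := O(\log n/\log q^{-1})$ with probability $1-o(1)$), any decreasing subsequence $i_1 < \cdots < i_k$ must satisfy $i_k - i_1 \leq 2D$, so
\[
\P(L_n^\downarrow \geq k) \leq o(1) + n\binom{2D}{k-1}q^{\binom{k}{2}} = o(1) + \exp\!\bigl(\log n + O(\sqrt{\log n}\cdot \log\log n) - (1+\varepsilon)^2\log n\bigr) \to 0.
\]

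For the lower bound I would construct explicit decreasing subsequences. For $k = (1-\varepsilon)\sqrt{2\log n/\log q^{-1}}$, consider the event
\[
E_i := \bigl\{J_i = k-1,\ J_{i+1} = k-2,\ \dots,\ J_{i+k-1} = 0\bigr\}.
\]
An induction on $j$ shows that on $E_i$, $\pi(i+j)$ is the $(k-j)$-th smallest element of $\{\pi(i+j),\dots,\pi(n)\}$, and a short set-theoretic argument then yields $\pi(i) > \pi(i+1) > \cdots > \pi(i+k-1)$. By independence of the Lehmer code,
\[
\P(E_i) = \prod_{j=0}^{k-1}\P(J_{i+j} = k-1-j) \geq c(1-q)^k q^{\binom{k}{2}}
\]
uniformly for $i \leq n-k$. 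Since $E_i$ depends only on $\{J_i,\dots,J_{i+k-1}\}$, the events $\{E_{1+mk}\}_{m=0,1,\dots}$ are mutually independent, and
\[
\P\Bigl(\bigcap_m E_{1+mk}^c\Bigr) \leq \exp\!\left(-\Omega\!\Bigl(\tfrac{n}{k}(1-q)^k q^{\binom{k}{2}}\Bigr)\right),
\]
where for our choice of $k$ the exponent inside the brackets diverges to $+\infty$ (since $\binom{k}{2}\log q^{-1} \leq (1-\varepsilon)^2 \log n$), so at least one $E_i$ occurs with high probability.

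The main technical hurdle I anticipate is the inversion accounting for the reversal involution, specifically the explicit formula $-2\min(L(j), k-L(j), a^*(j), k-a^*(j))$ for the change in cross inversions at each interior position, which requires a somewhat delicate four-case analysis on the positioning of $j$ relative to $S$ and on the rank of $\pi(j)$ among $\{\pi(i_a)\}$. The remaining ingredients---the independence structure of the Lehmer code, the exponential tail on displacements, the inductive verification that $E_i$ yields a decreasing run, and the first-moment computation---are routine consequences once that combinatorial lemma is in place.
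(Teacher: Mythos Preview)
Your proposal is correct and takes a genuinely different route from the paper.

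The paper proves Theorem~\ref{t:lln} via the regenerative block decomposition developed in Sections~\ref{s:mcregen}--\ref{s:finiterenewals}: it shows that $L_n^{\downarrow}$ is sandwiched between $\max_{i\le S_n-1}Y_i^{\downarrow}$ and $\max_{i\le S_n}Y_i^{\downarrow}$ where the $Y_i^{\downarrow}$ are i.i.d., establishes the tail asymptotic $\P(Y^{\downarrow}\ge k)=q^{k^2/2(1+o(1))}$ (Proposition~\ref{p:tail}), and concludes by a max-of-i.i.d.\ computation. The upper tail of $Y^{\downarrow}$ is obtained through a fairly delicate analysis of the Markov chain $M_n$ (Lemmas~\ref{l:geom}--\ref{l:censorbasic} and the four-case sum in the proof of Lemma~\ref{l:lb1}), relying on the exponential return-time estimates of Proposition~\ref{l:return}.

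Your argument bypasses the regenerative structure entirely. The upper bound rests on the inequality $\P(\pi(i_1)>\cdots>\pi(i_k))\le q^{\binom{k}{2}}$, proved by the reversal involution on $S=\{i_1,\dots,i_k\}$; your inversion accounting is correct (the change at each $j\notin S$ is indeed $|L+a^*-k|+|L-a^*|-k\le 0$, which one checks equals $-2\min(L,k-L,a^*,k-a^*)$). Combined with the geometric displacement tail $\P(|\pi(i)-i|>t)\le Cq^{t}$, this confines any decreasing $k$-tuple to a window of width $O(\log n)$ and the union bound closes. Your lower bound via the Lehmer-code event $E_i$ is also correct: the inductive claim that $J_{i+j}=k-1-j$ for all $j$ forces $\pi(i)>\cdots>\pi(i+k-1)$ holds because $J_{i+j}=k-1-j$ and $J_{i+j+1}=k-2-j$ together with the established ordering to the right pin down which element realises the $(k-1-j)$-th rank.

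What each approach buys: yours is more elementary and self-contained for this theorem---no Markov chain, no return-time estimates, no case split---and the reversal lemma is a clean combinatorial fact of independent interest. The paper's route, on the other hand, is not chosen for economy here: the regenerative machinery and the chain $M_n$ are already in place because they drive the CLT (Theorem~\ref{t:clt}), so Proposition~\ref{p:tail} reuses that infrastructure and yields the block-level tail $\P(Y^{\downarrow}\ge k)=q^{k^2/2(1+o(1))}$, which is a slightly sharper structural statement than what your argument produces.
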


The reversal $\pi^R$ of a permutation $\pi$ is given by $\pi^R(i) \defeq n+1 - \pi(i)$ for $1 \le i \le n$. It is easy to check (see e.g. \cite{BhaPel15}) that the reversal of a $\mbox{Mallows}(q)$ distributed permutation is a $\mbox{Mallows}(1/q)$ permutation. Since the length of an LIS of a permutation is equal to the length of an LDS of its reversal, Theorem \ref{t:lln} can also be interpreted as a law of large numbers of $L_n(q)$ when $q>1$.

\subsection{Background and Related Works}
\label{s:background}
The Mallows model was originally introduced motivated by ranking problems in statistics. The Mallows distribution can be defined more generally with respect to a reference permutation $\pi_0$ through the Kendall-Tau distance 
\[
d(\pi,\pi_0) \defeq \sum_{1 \le i < j \le n} \mathbf{1}_{\{\pi_0(i) < \pi_0(j)\}}\mathbf{1}_{\{\pi(i) > \pi(j)\}}
\]
which reduces to ${\rm inv}(\pi)$ when $\pi_0$ is the identity permutation. Mallows was interested in the problem of determining an unknown ``true" ordering or ``reference permutation" on the elements $\pi_0$ given samples of orderings penalized according to the number of pairs out of order compared to the reference permutation. Generalized Mallows models using a variety of metrics on permutations are used to model ranked and partially ranked data in machine learning and social choice theory \cite{Dia88,Cri85,FigVer93,Mar95,DiaRam00,LebRaf02}. 

The ${\rm Mallows}(q)$ measure is related to representations of the Iwahori-Hecke algebra  \cite{DiaRam00} and to a natural $q$-deformation of exchangeability studied by Gnedin and Olshanski \cite{GneOls10, GneOls12}. It also arises in connection with the stationary measures of the biased adjacent transpositions shuffle on $\mathcal S_n$ and of the nearest-neighbor asymmetric exclusion process on an interval where particles jump to the left with probability $1-p$ and to the right with probability $p$ and $q=p/(1-p)$ \cite{BenBerHofMos05}. Further, by the correspondence between the ASEP and the XXZ quantum spin system \cite{Cap05,Sta09}, the ground state of the XXZ model is a projection of the Mallows model on permutations.

The normalizing constant $Z_n(q)$ in the Mallows distribution has a closed form formula which Diaconis and Ram \cite{DiaRam00} observed to be the Poincar\'{e} polynomial
\begin{align}\label{e:poincare}
Z_{n,q} \defeq \sum_{\pi \in \mathcal{S}_n} q^{{\rm inv}(\pi)}= \prod_{i =1}^n \frac{q^i-1}{q-1} = [n]_q! = [n]_q \cdots [1]_q, \mathrm{ \ where \ } [i]_q = \frac{q^i-1}{q-1}.
\end{align}
The formula \eqref{e:poincare} for $Z_{n,q}$ implies a straightforward method for generating a random Mallows distributed permutation. 

%
%

As mentioned before, the question of determining the the length of the longest increasing subsequence of permutations drawn from a Mallows model for general $q$ was raised in \cite{BorDiaFul10}. When $q=1$, i.e.\ in the case of uniform random permutations, the asymptotics of $L_n$ (known as Ulam's problem) have been extensively studied. Vershik and Kerov \cite{VerKer77} and Logan and Shepp \cite{LogShe77} showed that $\E L_n/\sqrt n \to 2$ (see also \cite{AldDia95} for a proof using Hammersley's interacting particle system). Mueller and Starr \cite{MueSta11} first studied $L_n$ under the Mallows measure for $q \ne 1$. In the regime that $n(1-q)$ tends to a constant $\beta$, they established a weak law of large numbers showing that $L_n/\sqrt{n}$ converges in probability to a constant $C(\beta)$ which they determined explicitly. Their arguments rely on a Boltzmann-Gibbs formulation of a continuous version of the Mallows measure and the probabilistic approach of Deuschel and Zeitouni for analyzing the longest increasing subsequence of i.i.d.\ random points in the plane \cite{DeuZei95, DeuZei99}. 

Subsequently, in \cite{BhaPel15}, Bhatnagar and Peled established the leading order behavior of $L_n$ in the regime that $n(1-q) \to \infty$ and $ q \to 1$. They analyzed an insertion process which they called the Mallows process for randomly generating Mallows distributed permutations and showed that $L_n/n\sqrt{1-q}  \to 1$ in $L_p$ for $0<p < \infty$ as $n \to \infty$. They established the order of $L^{\downarrow}_n$ and showed that it grows at different rates for different regimes of $q$ as a function of $n$ (in particular they showed $\E L_n^{\downarrow}(q)=\Theta (\sqrt{\log n/\log q^{-1}})$ when $q\in (0,1)$ is fixed) and proved large deviation bounds for $L_n$ and $L^{\downarrow}_n$. They also established a linear upper bound on the variance of $L_n$ and left open the questions of determining the precise variance and the distribution of $L_n$ for all regimes of $n$ and $q$.

Recently, further progress been made in the analysis of the empirical measure of points corresponding to the Boltzmann-Gibbs measure of Mueller and Starr. Mukherjee \cite{Muk13} determined the large deviation rate function of the empirical measure of points and Starr and Walters recently showed that the large deviation principle has a unique optimizer \cite{StaWal15}.

The case $q=1$ is special because it is one of the exactly solvable models that belong to the so-called KPZ universality class. In this case, the longest increasing subsequence problem can also be represented as a directed last passage percolation problem in a Poissonian environment. The results of \cite{VerKer77,LogShe77} follow from an asymptotic analysis of exact formulae that can be obtained for $\E L_n$ through a combinatorial bijection between $\mathcal S_n$ and Young tableaux known as the Robinson-Schensted-Knuth (RSK) bijection \cite{Rob38,Sch61, Knu70}. The RSK bijection can be further used to obtain the order of fluctuations and scaling limit of $L_n$ in this case. In their breakthrough work, Baik, Deift and Johansson showed that for uniformly random permutations $L_n$ has fluctuations of the order of $n^{1/6}$ and the limiting distribution of $n^{-1/6}(L_n-2\sqrt{n})$ is the GUE Tracy-Widom distribution from random matrix theory \cite{BaiDeiJoh99}. 

When $q\neq 1$, the integrable structure is lost, and the powerful combinatorial, algebraic and analytic tools used in \cite{BaiDeiJoh99} are no longer available for finding the limiting distribution of $L_n$. Indeed, as Theorem \ref{t:clt} shows, for $q$ bounded away from $1$, we get a different  scaling limit, namely Gaussian with diffusive scaling, in contrast  with the Tracy-Widom limit with subdiffusive scaling one gets for $q=1$. 

Theorem \ref{t:clt} indicates that as $q\to 1$, there is a phase transition in the limiting distribution of $L_n$ between Gaussian and Tracy-Widom. There are other models where such transitions have been shown to occur. The so-called BBP transition \cite{BaiBenPec05} for the spiked complex Wishart model, a model for non-null covariance matrices \cite{John01}, is a prime example. Connections with random matrices have been exploited to show similar transitions in exactly solvable models of last passage percolation with external sources \cite{BenCor11, BaiRai01}. Much less is understood in the absence of the exactly solvable machinery. Chatterjee and Dey \cite{ChaDey13} show that the first passage time across thin cylinders obeys a central limit theorem. Dey, Joseph and Peled \cite{DeyJosPel16} consider the similar problem of directed last passage percolation in a Poissonian environment restricted to band of width $n^{\gamma}$ around the diagonal. Relying on finer estimates available for exactly solvable models they establish a sharp transition in $\gamma$ for the limiting law of the length of a maximal path, showing that when $\gamma<2/3$, the limiting distribution is Gaussian. It is known that if $\gamma>2/3$ the limit is again Tracy-Widom. Another recent example of a Gaussian scaling limit in a last passage percolation model is obtained by Houdr\'e and I\c{s}lak \cite{HouIsl14} for the length of the longest common subsequence of random words.

The Mallows model is not known to be exactly solvable and unlike many of the exactly solvable models described above, the location of the transition (or indeed, if there is only one transition or multiple ones) is currently not known. It can be shown (see Remark \ref{r:tw} in Section \ref{s:oc}) that if $q\to 1$ sufficiently fast as $n\to \infty$ that $L_n$, properly centered and scaled, indeed has a Tracy-Widom distribution. To understand this transition(s) is a fascinating problem. By making the calculations in our proof of Theorem \ref{t:clt} quantitative, it may be possible to show that a central limit theorem also holds for $q$ tending to $1$ sufficiently slowly as $n$ goes to infinity. However, in order to avoid technical complications, we do not pursue this direction in this paper.

%
%
%

Our analysis hinges on identifying a regenerative process associated with an infinite Mallows permutation. In \cite{GneOls10}, Gnedin and Olshanski study a notion of $q$-exchangeability in infinite Mallows permutations on $\mathbb Z$.  As in the finite case, there is a natural insertion process for generating infinite Mallows permutations. One key observation in our work is that we can define a certain $\mathbb N$-valued Markov chain associated with this process. The times when the chain reaches $0$ form a set of regeneration times which enables us to view $L_n$ as a sum of longest increasing subsequences restricted to the permutation defined by the interval between the renewal times; see Section \ref{s:mcregen} for details. This view allows us to apply results from renewal theory directly to establish Theorem \ref{t:clt} and makes the analysis rather clean. A more refined analysis of the same Markov chain allows us to establish Theorem \ref{t:lln}. 

One could hope that this or similar constructions might be useful in studying other properties of the Mallows measure. Indeed, shortly before completing this work, we learnt that Gladkich and Peled use a construction of Mallows permutations and an associated Markov chain similar to what we define to analyze the cycle structure of random Mallows distributed permutations \cite{GlaPel16}. For the chain in \cite{GlaPel16}, Gladkich and Peled also analyze the excursions away from zero, and indeed, the two chains share the same return times to zero and they obtain estimates similar to ours.

\subsection*{Organization of the paper} The rest of this paper is organized as follows. In Section \ref{s:mallows-process} we recall the construction of a Mallows permutation via Mallows' process. In Section \ref{s:mcregen} we obtain a regenerative process representation of a Mallows process. In Section \ref{s:finiterenewals} we obtain estimates on the renewal time of the regenerative process by relating it to a return time of a certain Markov chain. Using these estimates we complete proofs of Theorem \ref{t:clt} and Theorem \ref{t:lln} in Section \ref{s:anscombe} and Section \ref{s:lln} respectively. We conclude with some open questions in Section \ref{s:oc}.

\subsection*{Acknowledgements} The authors are grateful to Ron Peled for many helpful conversations and letting us know of the results in \cite{GlaPel16} and \cite{DeyJosPel16}. We thank Allan Sly for useful discussions and Shannon Starr for explaining some of the details of the results in \cite{MueSta11}. We also thank two anonymous referees for an extremely careful reading of the manuscript and many useful comments and suggestions that helped improve the paper. We also thank one of the anonymous referees for pointing out how the expression for the stationary distribution of the Markov chain we used can be simplified (see Remark \ref{rem:Zq}).

\section{Constructing Mallows Permutations}
\label{s:mallows-process}
Fix $0<q<1$. Gnedin and Olshanski \cite{GneOls10,GneOls12} constructed an {\bf infinite  $\mbox{Mallows}(q)$ permutation} on $\mathbb N$ via an insertion process, which we will also refer to as $\mbox{Mallows}(q)$ process, often dropping $q$ from the argument. The process is as follows. Start with an i.i.d.\ sequence $\{Z_i\}_{i\geq 1}$ of $\mbox{Geom}(1-q)$ variables. Construct a permutation $\widetilde{\Pi}$ of the natural numbers inductively as follows: Set $\widetilde{\Pi}(1)=Z_1$. For $i>1$, set $\widetilde{\Pi}(i) = k$ where $k$ is the $Z_i$-th number in the increasing order from the set $\mathbb{N}\setminus \{\widetilde{\Pi}(j): 1\leq j<i\}$.  For example, suppose that the realizations of the first five independent geometrics are $z_1 = 4, z_2 = 1, z_3 = 6, z_4 = 2$, and $z_5 = 3$. Then we have $\widetilde{\Pi}(1) = 4, \widetilde{\Pi}(2) = 1, \widetilde{\Pi}(3) = 8, \widetilde{\Pi}(4) = 3$ and $\widetilde{\Pi}(5) = 6$. We represent the process step-by-step below. Note that in each step, the new element $i$ is placed in the $z_i$-th unassigned position among the currently unassigned positions.
\vspace{0.1in}

\begin{center}

\begin{tabular}{cccccccccc}
$\underline{ \ \ \  } $ & $\underline{ \ \ \  }$ & $\underline{ \ \ \  }$ & 1 & $\underline{ \ \ \  }$ & $\underline{ \ \ \  }$ & $\underline{ \ \ \  }$ & $\underline{ \ \ \  }$ & $\underline{ \ \ \  }$ & $\cdots$  \\
2  & $\underline{ \ \ \  }$ & $\underline{ \ \ \  }$ & 1 & $\underline{ \ \ \  }$ & $\underline{ \ \ \  }$ & $\underline{ \ \ \  }$ & $\underline{ \ \ \  }$ & $\underline{ \ \ \  }$ & $\cdots$  \\
2  & $\underline{ \ \ \  }$ & $\underline{ \ \ \  }$ & 1 & $\underline{ \ \ \  }$ & $\underline{ \ \ \  }$ & $\underline{ \ \ \  }$ & 3 & $\underline{ \ \ \  }$ & $\cdots$  \\
2  & $\underline{ \ \ \  }$ & 4 & 1 & $\underline{ \ \ \  }$ & $\underline{ \ \ \  }$ & $\underline{ \ \ \  }$ & 3 & $\underline{ \ \ \  }$ & $\cdots$  \\
2  & $\underline{ \ \ \  }$ & 4 & 1 & $\underline{ \ \ \  }$ & 5 & $\underline{ \ \ \  }$ & 3 & $\underline{ \ \ \  }$ & $\cdots$  \\
\end{tabular}

\end{center}

Let $\Pi_{n}$ be the permutation on $[n]$ induced by $\widetilde{\Pi}$, i.e., $\Pi_{n}(i)=j$ if  $\widetilde{\Pi}(i)$ has rank $j$ when the set $\{\widetilde{\Pi}(k):k\in [n]\}$ is arranged in an increasing order. Consider in the above example $n=4$. Then we have  ${\Pi}_4(1)=3$, ${\Pi}_4(2)=1$, $\tilde{\Pi}(3)=4$ and $\Pi_{4}(4)=2$. In short, we shall write this permutation as 
$\Pi_4 = 2413$, with the interpretation that $\Pi_4$ takes the element $1$ to the position $3$ and so on. In general, with this representation we can read off $\Pi_{n}$ from the above array by restricting to the elements in $[n]$ in the representation above.  
The following lemma is essentially contained in \cite{GneOls10} although it is not explicitly spelt out there. We provide a proof for completeness.

\begin{lemma}
\label{l:infinitetofinite}
Let $\widetilde{\Pi}$ be an infinite $\mbox{Mallows}(q)$ permutation and let $\Pi_n$ be the induced permutation on $[n]$, as defined above. Then $\Pi_n$ is a $\mbox{Mallows}(q)$ permutation on $[n]$.   
\end{lemma}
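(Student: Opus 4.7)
The plan is to prove the lemma by a direct computation of $\P(\Pi_n=\sigma)$ for every $\sigma\in\mathcal S_n$, marginalizing over the (a priori infinite) set of possible values taken by $\widetilde\Pi(1),\ldots,\widetilde\Pi(n)$.

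The first step is to compute, for any choice of $n$ distinct positive integers $a_1,\ldots,a_n$, the probability of the elementary event $\{\widetilde\Pi(i)=a_i\text{ for all }i\le n\}$. By the independence of the $Z_i$ and the insertion rule that characterizes $\widetilde\Pi$, this probability equals $\prod_{i=1}^{n}(1-q)q^{r_i-1}$, where $r_i$ is the rank of $a_i$ inside $\N\setminus\{a_1,\ldots,a_{i-1}\}$. An elementary count gives $r_i-1=(a_i-1)-\#\{j<i:a_j<a_i\}$, so summing over $i$ yields
\[
\sum_{i=1}^{n}(r_i-1)=\sum_{i=1}^{n}(a_i-1)-\binom{n}{2}+\mathrm{inv}(a_1,\ldots,a_n),
\]
where $\mathrm{inv}$ denotes the number of inversions of the sequence.

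Next I would fix a target permutation $\sigma\in\mathcal S_n$ and regroup by the set $A=\{\widetilde\Pi(1),\ldots,\widetilde\Pi(n)\}$. Writing $A=\{a_{(1)}<a_{(2)}<\cdots<a_{(n)}\}$, the event $\{\Pi_n=\sigma,\ \{\widetilde\Pi(1),\ldots,\widetilde\Pi(n)\}=A\}$ coincides with $\{\widetilde\Pi(i)=a_{(\sigma(i))}\text{ for all }i\le n\}$, and the number of inversions of $(a_{(\sigma(1))},\ldots,a_{(\sigma(n))})$ equals $\mathrm{inv}(\sigma)$ since the $a_{(i)}$ are listed in increasing order. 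Combining with the previous display and summing over all $A\subset\N$ of size $n$ gives
\[
\P(\Pi_n=\sigma)=(1-q)^n\, q^{\mathrm{inv}(\sigma)-\binom{n}{2}-n}\sum_{A\subset\N,\,|A|=n} q^{\sum_{a\in A}a}.
\]

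The final ingredient is a standard $q$-series identity. Via the bijection $\{a_{(1)}<\cdots<a_{(n)}\}\longleftrightarrow(b_1,\ldots,b_n)=(a_{(1)}-1,a_{(2)}-2,\ldots,a_{(n)}-n)$ with $0\le b_1\le\cdots\le b_n$ arbitrary, one finds $\sum_A q^{\sum a}=q^{n(n+1)/2}\prod_{i=1}^n(1-q^i)^{-1}$, recognizing the generating function of partitions with at most $n$ parts. Substituting back, the exponent of $q$ collapses (since $\binom{n+1}{2}-\binom{n}{2}-n=0$) to $\mathrm{inv}(\sigma)$, and the prefactor $(1-q)^n\prod_{i=1}^{n}(1-q^i)^{-1}$ equals $1/Z_{n,q}$ by \eqref{e:poincare}, yielding $\P(\Pi_n=\sigma)=q^{\mathrm{inv}(\sigma)}/Z_{n,q}$ as required. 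The proof is essentially a bookkeeping exercise; the only step that really warrants care is the generating function identity together with checking that all the exponents of $q$ telescope correctly.
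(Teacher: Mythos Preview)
Your argument is correct. The rank identity $r_i-1=(a_i-1)-\#\{j<i:a_j<a_i\}$ gives exactly the claimed decomposition of $\sum_i(r_i-1)$, the partition generating function is the right tool for $\sum_A q^{\sum_{a\in A}a}$, and the exponents telescope as you say.

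The paper proceeds differently: rather than summing over $A$, it conditions on $A$ and compares $\P(\Pi_n=\pi,\,A)$ with $\P(\Pi_n=\pi',\,A)$ for $\pi'=(i,i+1)\circ\pi$, checking by hand that the ratio is $q^{\pm1}$ according to the relative order of $\pi(i)$ and $\pi(i+1)$; since adjacent transpositions generate $\mathcal S_n$, this pins down the conditional law of $\Pi_n$ given $A$ as $\mbox{Mallows}(q)$, and the unconditional statement follows without ever evaluating $\sum_A q^{\sum a}$. Your route is a direct marginalization and is arguably more self-contained (it also yields the normalizing constant $Z_{n,q}$ as a byproduct), whereas the paper's ratio argument is shorter and establishes the slightly stronger fact that the law of $\Pi_n$ is $\mbox{Mallows}(q)$ even conditionally on the image set $A$. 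Note that your computation shows this too: before summing over $A$, the only $\sigma$-dependence is through $q^{\mathrm{inv}(\sigma)}$.
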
 

\begin{proof}Fix the values of $\widetilde \Pi (i)$ for $i \in [n]$ and let the set of these values be $A=A_{n}=\{a_i:i\in [n]\}$. Suppose that $\pi \in \mathcal S_n$. Given $A$ and $\pi$, there is a uniquely determined set of values $\{z_i\}_{i=1}^n$ for the first $n$ geometric random variables so that $\Pi_n = \pi$. Moreover,
\[
\P(\Pi_n = \pi , A ) = (1-q)^n \prod_{i=1}^n q^{z_i - 1}.
\]
For any $1 \le i \le n$, let $\pi' = (i,i+1) \circ \pi$, that is, $\pi'$ is the permutation obtained from $\pi$ by exchanging the positions of the elements $i$ and $i+1$. Letting $\{z'_i\}_{i=1}^n$ denote the corresponding geometrics, it is simple to verify that, depending on whether $\pi(i) < \pi(i+1)$ or not, either $z'_i = z_{i+1} + 1$ and $z'_{i+1} = z_i$ or $z'_i = z_{i+1}$ and $z'_{i+1} =z_i -1$ while $z_j = z'_j$ for all $j \notin \{i,i+1\}$. Thus 
\[
\frac{\P(\Pi_n = \pi', A )}{\P(\Pi_n = \pi, A)} = 
\begin{cases}
q & \mathrm{if} \ \pi(i) < \pi(i+1)\\
\frac 1q & \mathrm{if} \ \pi(i) > \pi(i+1)
\end{cases}.
\]
Thus the distribution of $\Pi_n$ conditioned on $A$ is $\mbox{Mallows}(q)$ since the transpositions generate the group $\mathcal S_n$. Summing over all possibilities for $A$ completes the proof of the claim.
\end{proof}

Notice that the construction of $\widetilde{\Pi}$ and $\Pi_n$ as described in Section \ref{s:mallows-process} generates a family of ${\rm Mallows} (q)$ permutations on $[n]$ for $n \in \mathbb N$ on the same probability space.
Henceforth when we talk about a $\mbox{Mallows}(q)$ permutation $\Pi_n$ on $[n]$, it will be assumed that $\Pi_n$ is constructed from an infinite $\mbox{Mallows}(q)$ permutation as described above. 

\section{The Regenerative Process Representation}
\label{s:mcregen}
A stochastic process $\mathbf{X} = \{X(t) \ : \ t \ge 0\}$ is said to be a \textbf{regenerative process} if there exist \textbf{regeneration times} $0 \le T_0 < T_1 < T_2 < \cdots$ such that for each $k  \ge 1$, the process $\{X(T_k+t) \ : \ t \ge 0\}$ has the same distribution as $\{X(T_0+t) \ : \ t \ge 0\}$ and is independent of $\{X(t) \ : \ 0\le t < T_k\}$. Below we define a regenerative process associated with the $\mbox{Mallows}(q)$ process described above.

\begin{figure}[t]
\centering
\includegraphics[width=6cm]{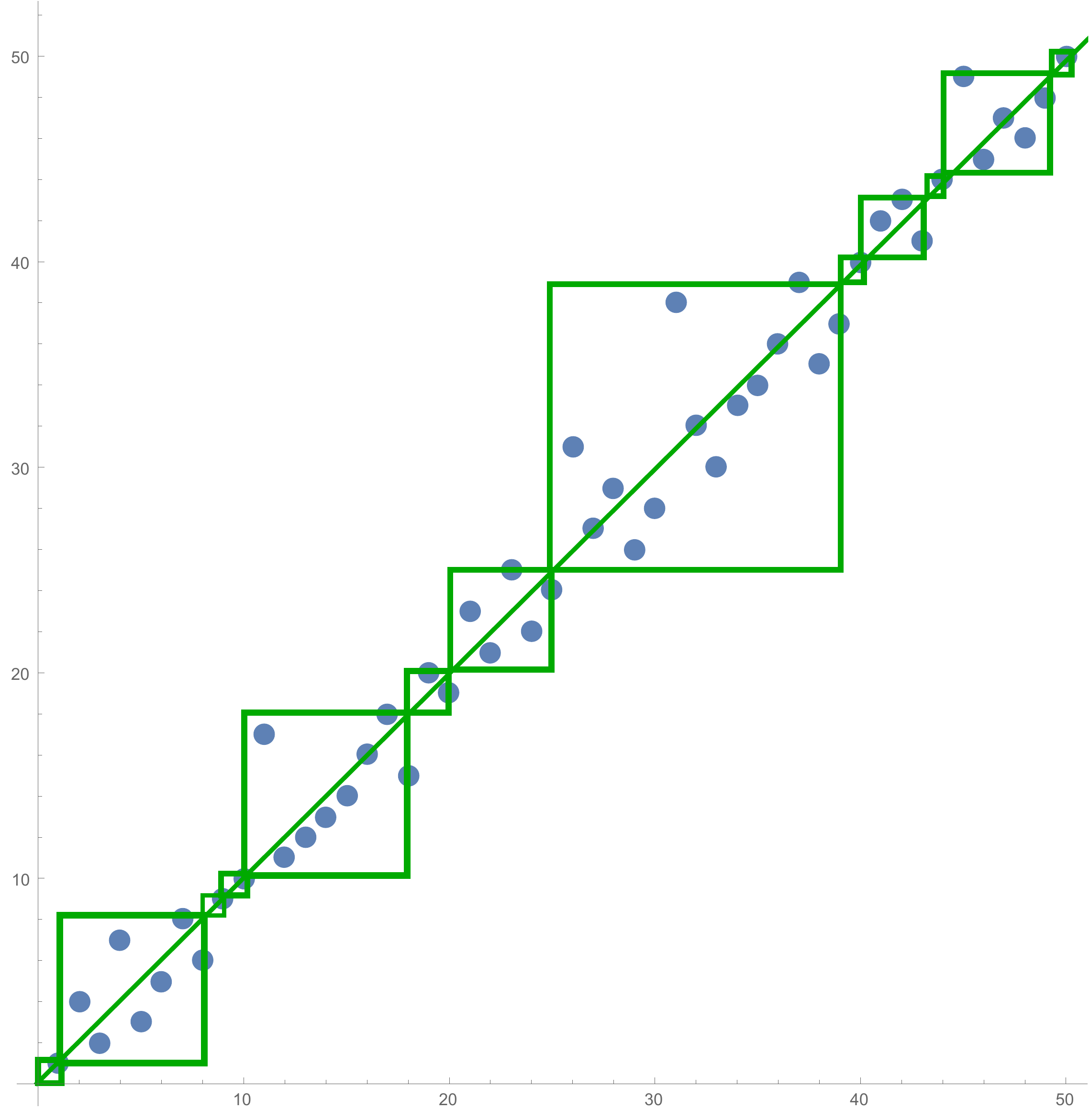}
\caption{The regeneration times $T_0 < T_1 < \cdots$ are marked by the corners of the squares.}
\label{f:renewals}
  \end{figure}
    

Recall the sequential construction of $\widetilde{\Pi}$ and the induced permutation $\Pi_n$. Suppose $m\in \mathbb{N}$ is such that we have $\widetilde{\Pi}([m])=[m]$, i.e.\ the permutation $\widetilde{\Pi}$ restricted to $[m]$ defines a bijection from $[m]$ to $[m]$. Define the permutation $\widetilde{\Pi}^{*}: \mathbb{N}\rightarrow \mathbb{N}$ by $\widetilde{\Pi}^{*}(i)=\widetilde{\Pi}(i+m)-m$. It is clear from the construction of $\widetilde{\Pi}$ that  $\widetilde{\Pi}^{*}$ and  $\widetilde{\Pi}$ have the same law. Together with the independence of the geometric variables $\{Z_i\}$ this implies that $\{\widetilde{\Pi}(i)-i\}_{i\in \mathbb N}$ is a regenerative process with regeneration times $0=T_0<T_1<T_2<\cdots $ where for $i>1$ we have,
$$ T_i=\min\{j>T_{i-1}: \{\widetilde{\Pi}(k): k\in [j]\setminus [T_{i-1}]\}=[j]\setminus [T_{i-1}]\}. $$

We illustrate by an example. Suppose that in the sequential construction for $\widetilde \Pi$, the values of the first 8 geometrics are $z_1 = 1, z_2 = 2, z_3 = 3, z_4 = 1, z_5 = 2, z_6 =3 , z_7 = 1$ and $z_8 = 1$. This corresponds to the permutation
\[1 \ \ 4 \ \ 2 \ \ 7 \ \  3 \ \  5 \ \ 8 \ \ 6 \ \ 9 \ \ 10 \ \ \underline{ \ \ \ } \ \ \underline{ \ \ \ } \ \ \cdots \]
and we see that $T_1 = 1, T_2 = 8, T_3 = 9$ and $T_4 = 10$ are the regeneration times. We may also view the graphical representation of the permutation by plotting the points $(i,\widetilde \Pi(i))$ in $\mathbb R^2$ for each $i \in \mathbb N$. This is illustrated in Figure \ref{f:renewals} for the first 50 elements of $\widetilde \Pi$ which is a random ${\rm Mallows}(0.55)$ permutation. The regeneration times are marked by the corners of the squares which lie on the diagonal $y=x$. The figure illustrates that the points can be partitioned into such squares, which are minimal in the sense that no smaller square with its corners on the diagonal can contain a strict subset of the points in a box.

Set $X_{i}=T_{i}-T_{i-1}$ for $i\geq 1$. Clearly, $X_i$ are independent and identically distributed.  Let $\Sigma_j(i):=\widetilde{\Pi}(i+T_{j-1})-T_{j-1}$ for $i\in \{T_{j-1}+1, T_{j-1}+2, \ldots, T_{j}\}$. Then $\Sigma_j$ is a permutation of $[X_j]$ and furthermore the $\{\Sigma_j\}_{j\geq 1}$ are i.i.d.. Let
$S_n \defeq \min \{ j: T_j\geq n \}$. 

Recall that $L_n$ (resp.\ $L^{\downarrow}_n$) is the length of the longest increasing (resp.\ decreasing) subsequence in $\Pi_n$. The following two lemmas connect $L_n$ and $L^{\downarrow}_n$ with the corresponding statistics defined in the permutations $\{\Sigma_j\}_{j\geq 1}$.


\begin{lemma}
\label{l:approx}
For $j\geq 1$, let $Y_j$ denote length of a longest increasing subsequence of $\Sigma_j$. Then we have,
$$ \sum_{j=1}^{S_n-1} Y_j < L_n \leq \sum_{j=1}^{S_n} Y_j.$$
\end{lemma}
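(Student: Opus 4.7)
The plan is to exploit the block structure forced by the regeneration times. The key structural fact is that for $j < j'$, every value $\widetilde{\Pi}(i)$ with $i \in (T_{j-1}, T_j]$ lies in $\{T_{j-1}+1, \ldots, T_j\}$, while every value $\widetilde{\Pi}(i')$ with $i' \in (T_{j'-1}, T_{j'}]$ lies in $\{T_{j'-1}+1, \ldots, T_{j'}\}$, and $T_j \le T_{j'-1}$. Thus any value appearing in an earlier block is strictly smaller than any value appearing in a later block. Since an increasing subsequence of $\Pi_n$ is, by the rank-preserving definition of $\Pi_n$, the same as a subsequence of positions $i_1 < \cdots < i_k$ in $[n]$ with $\widetilde{\Pi}(i_1) < \cdots < \widetilde{\Pi}(i_k)$, the block decomposition immediately transfers to increasing subsequences.

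For the upper bound $L_n \le \sum_{j=1}^{S_n} Y_j$, I would take any increasing subsequence of $\Pi_n$ realizing $L_n$ and group its positions according to which block $(T_{j-1}, T_j]$ they lie in. Only blocks $j = 1, 2, \ldots, S_n$ can contribute, since positions beyond $T_{S_n} \ge n$ are excluded. The portion landing in block $j$ is, after the shift $i \mapsto i - T_{j-1}$, $\widetilde{\Pi}(i) \mapsto \widetilde{\Pi}(i) - T_{j-1}$, an increasing subsequence of $\Sigma_j$ (for $j < S_n$ it uses all positions of block $j$, while for $j = S_n$ it uses only a prefix; in either case the contribution is bounded by $Y_j$). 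Summing gives the upper bound.

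For the strict lower bound $\sum_{j=1}^{S_n - 1} Y_j < L_n$, I would construct an increasing subsequence of $\Pi_n$ by concatenating, in order, a longest increasing subsequence of $\Sigma_j$ (shifted back by $T_{j-1}$ in both position and value) for each $j = 1, \ldots, S_n - 1$. The value-ordering between blocks noted above ensures the concatenation remains increasing. This already produces a subsequence of length $\sum_{j=1}^{S_n - 1} Y_j$, all of whose positions lie in $(0, T_{S_n - 1}] \subseteq [n]$ by definition of $S_n$. To get the strict inequality, I append the single position $T_{S_n - 1} + 1$, which lies in $[n]$ (since $T_{S_n - 1} < n$) and satisfies $\widetilde{\Pi}(T_{S_n - 1} + 1) > T_{S_n - 1}$, hence exceeds every value used so far. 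This yields an increasing subsequence of length $\sum_{j=1}^{S_n-1} Y_j + 1$.

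I do not expect any genuine obstacle here; the only point requiring mild care is to make sure that in the upper-bound argument, the portion of a maximal increasing subsequence falling in the partial block $S_n$ is still counted against $Y_{S_n}$, which follows because restricting positions to a sub-interval of $(T_{S_n-1}, T_{S_n}]$ can only shrink the set of available increasing subsequences of $\Sigma_{S_n}$.
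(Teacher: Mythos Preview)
Your proof is correct and follows the same block-decomposition approach as the paper: restrict a longest increasing subsequence to each block for the upper bound, and concatenate block-wise maxima for the lower bound. In fact your argument is slightly more careful than the paper's: the paper's proof only says that concatenating longest increasing subsequences from blocks $1,\ldots,S_n-1$ yields an increasing subsequence of $\Pi_n$, which a priori gives only $\sum_{j=1}^{S_n-1} Y_j \le L_n$; your extra step of appending the position $T_{S_n-1}+1$ (which lies in $[n]$ since $T_{S_n-1}<n$, and whose $\widetilde{\Pi}$-value exceeds $T_{S_n-1}$ because $[T_{S_n-1}]$ has already been used) is exactly what is needed to justify the strict inequality as stated.
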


\begin{proof}
By Lemma \ref{l:infinitetofinite}, the LIS of $\Pi_n$ is distributed as $L_n$. 
Observe that any subsequence in $\Sigma_{j}$ corresponds, in an obvious way, to a subsequence in $\widetilde{\Pi}$ and conversely, any subsequence of $\widetilde{\Pi}$ contained in $[T_j]\setminus[T_{j-1}]$ corresponds uniquely to a subsequence in $\Sigma_{j}$.
An increasing subsequence of $\Pi_n$ when restricted to $[T_j]\setminus [T_{j-1}]$ for $1 \le j \le S_n$ corresponds to an increasing subsequence of $\Sigma_j$, which implies the upper bound. On the other hand, any union of increasing subsequences in the $\Sigma_j$ for $1 \le j \le S_n-1$ corresponds to an increasing subsequence in $\Pi_n$, which implies the lower bound. 
\end{proof}

\begin{lemma}
\label{l:ldsapprox}
 Let $Y^{\downarrow}_i$ denote the length of the LDS in $\Sigma_i$. We have 
\begin{align}
\max_{i\leq S_n-1} Y^{\downarrow}_i \leq L^{\downarrow}_n \leq \max_{i\leq S_n} Y^{\downarrow}_i.
\label{e:lds-upper-lower}
\end{align}
\end{lemma}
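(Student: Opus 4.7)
The plan hinges on one key structural fact: any decreasing subsequence of $\widetilde{\Pi}$ must be contained within a single block $(T_{j-1}, T_j]$. Indeed, by the defining property of the regeneration times, $\{\widetilde{\Pi}(i) : T_{j-1}<i\leq T_j\} = \{T_{j-1}+1,\ldots,T_j\}$, so values taken on distinct blocks form disjoint integer intervals with later blocks contributing strictly larger values. Consequently, if $i_1<i_2$ belong to blocks $j_1<j_2$ respectively, then $\widetilde{\Pi}(i_1)\leq T_{j_1}\leq T_{j_2-1}<\widetilde{\Pi}(i_2)$, which forbids them from sitting together in any decreasing subsequence.

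Passing from $\widetilde{\Pi}$ to $\Pi_n$ is immediate: since $\Pi_n(i)$ is the rank of $\widetilde{\Pi}(i)$ in $\{\widetilde{\Pi}(k):k\in[n]\}$ and taking ranks preserves relative order, the length of an LDS of $\Pi_n$ equals the length of the longest decreasing subsequence of $\widetilde{\Pi}$ restricted to positions in $[n]$. By the previous paragraph, any optimal such subsequence lies entirely within a single block indexed by some $j \leq S_n$.

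For the upper bound in \eqref{e:lds-upper-lower}, I would restrict an LDS of $\Pi_n$ to the unique block it occupies. If that block is $j \leq S_n - 1$, the block lies entirely inside $[n]$ and, under the natural correspondence $i \mapsto i - T_{j-1}$ between positions in block $j$ and positions of $\Sigma_j$, the restricted sequence has length at most $Y_j^{\downarrow}$. If instead the block is $S_n$, the LDS of $\Pi_n$ restricted to $(T_{S_n-1}, n]$ corresponds to an LDS on an initial segment of $\Sigma_{S_n}$'s positions and is therefore of length at most $Y_{S_n}^{\downarrow}$. Either way $L_n^{\downarrow} \leq \max_{j \leq S_n} Y_j^{\downarrow}$. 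For the lower bound, each block $j \leq S_n - 1$ sits entirely inside $[n]$, so any decreasing subsequence of $\Sigma_j$ of length $Y_j^{\downarrow}$ lifts to a decreasing subsequence of $\Pi_n$ of the same length, giving $L_n^{\downarrow} \geq \max_{j \leq S_n - 1} Y_j^{\downarrow}$.

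I do not anticipate any serious obstacle: the argument mirrors that of Lemma \ref{l:approx} with maximum replacing sum. The only new ingredient is the block-separation property above, which rules out decreasing subsequences that traverse multiple blocks and so collapses the block-wise contributions into a maximum rather than a sum.
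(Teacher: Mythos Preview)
Your proposal is correct and follows exactly the approach the paper indicates: the paper's proof simply states that any decreasing subsequence of $\Pi$ must be contained in a single block $[T_i]\setminus[T_{i-1}]$ and omits the details, while you have carefully spelled out why this block-separation property holds and how both inequalities follow from it. There is nothing to add or correct.
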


\begin{proof}
The lemma follows by observing that any decreasing subsequence of $\Pi$ must be contained in $[T_i]\setminus [T_{i-1}]$ for some $i$. We omit the details.  
\end{proof}

\section{Renewal Time Estimates via a Markov Chain}
\label{s:finiterenewals}
Our objective in this section is to prove that the inter-renewal times $X_i$ as defined in the previous section has finite first and second moments. These are the conditions we require to apply results from the theory of regenerative processes to show the central limit theorem for $L_n$. We define a Markov chain such that the $X_i$'s can be represented as the excursion lengths of this Markov chain. Kac's formula for the moments of return times from the theory of recurrent Markov chains then implies that the moments of $X_i$ are finite.

For convenience,  let $X$ denote a random variable with the distribution same as the common one of $X_i$'s.  First we show that $X$ has the same law as the return time of a certain Markov chain which we define below.

Let $\{M_n\}_{n\geq 0}$ denote a Markov chain with the state space $\Omega=\N \cup \{0\}$ and the one step transition defined as follows: set $M_n=\max\{M_{n-1}, Z_n\}-1$ where $\{Z_i\}$ is a sequence of i.i.d.\ $\mbox{Geom}(1-q)$ variables.  Let $R_0^{+}$ denote the first return time to 0 of this chain, i.e.\,
$$R_0^{+}=\min\{k>0: M_k=0\}.$$

\begin{lemma}
\label{l:mc}
For the Markov chain $M_n$ started at $M_0=0$, the return time $R_0^{+}\stackrel{d}{=}T_1$. In particular $X$ has the same law as $R_0^{+}$.
\end{lemma}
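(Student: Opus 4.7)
The plan is to realize the Markov chain $\{M_n\}$ as an explicit functional of the very Mallows insertion process that produces $\widetilde{\Pi}$, driven by the identical i.i.d.\ $\mathrm{Geom}(1-q)$ sequence $\{Z_i\}$. Set
$$\widehat{M}_n \defeq \max_{1 \le k \le n} \widetilde{\Pi}(k) - n,$$
with the convention $\widehat{M}_0 \defeq 0$. I will prove two things: (i) $\widehat{M}_n$ satisfies the same recursion and initial condition as $M_n$, so that $\widehat{M}_n = M_n$ almost surely under this coupling; and (ii) the event $\{\widetilde{\Pi}([n]) = [n]\}$ coincides with $\{\widehat{M}_n = 0\}$. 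Together these will give $T_1 = R_0^{+}$ almost surely under the coupling, and since $X \stackrel{d}{=} X_1 = T_1 - T_0 = T_1$, the lemma follows.

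For (i), I would argue combinatorially at the level of the insertion process. After $n-1$ insertions the largest occupied position is $\widehat{M}_{n-1} + (n-1)$, and the number of unassigned positions lying in the interval $[\widehat{M}_{n-1} + (n-1)]$ is therefore exactly $\widehat{M}_{n-1}$. Splitting on the value of $Z_n$: if $Z_n \le \widehat{M}_{n-1}$, the new element $n$ occupies one of these interior vacancies, the running maximum is unchanged, and $\widehat{M}_n = \widehat{M}_{n-1} - 1$; if $Z_n > \widehat{M}_{n-1}$, then $\widetilde{\Pi}(n) = (n-1) + Z_n$ becomes the new running maximum, yielding $\widehat{M}_n = Z_n - 1$. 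Combining the two cases produces
$$\widehat{M}_n = \max\{\widehat{M}_{n-1},\, Z_n\} - 1,$$
which is the defining recursion of $M_n$. Since $\widehat{M}_0 = 0 = M_0$ and both processes are driven by the same $\{Z_n\}$, the coupling $\widehat{M}_n = M_n$ holds for all $n \ge 0$.

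For (ii), $\widetilde{\Pi}([n])$ is an $n$-element subset of $\N$, so by pigeonhole its maximum is at least $n$, and the two are equal if and only if $\widetilde{\Pi}([n]) = [n]$. But $\max_{k\le n}\widetilde{\Pi}(k) = n$ is exactly $\widehat{M}_n = 0$. Hence
$$T_1 = \min\{n > 0 : \widetilde{\Pi}([n]) = [n]\} = \min\{n > 0 : \widehat{M}_n = 0\} = R_0^{+}$$
under the above coupling, completing the proof. I do not expect a genuine obstacle here: the entire content of the lemma is the recognition that the excess of the running maximum over $n$ in the Mallows insertion process is itself Markovian with the prescribed transition, and that its zero set coincides exactly with the renewal times of the regenerative process from Section \ref{s:mcregen}.
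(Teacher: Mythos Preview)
Your proof is correct and follows essentially the same approach as the paper: the paper couples $M_n$ with the Mallows process via the same geometrics and asserts the identity $M_n=\max_{1\le j\le n}\widetilde{\Pi}(j)-n$, from which the conclusion is immediate. You have simply supplied the details the paper leaves to the reader, namely the case analysis verifying the recursion for the running-maximum process and the pigeonhole argument identifying the zero set with the regeneration times.
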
 

\begin{proof}
Couple the Markov chain $M_n$ with $M_0=0$ with the Mallows' process by using the same sequence $\{Z_i\}$ of random variables to run both processes. Under this coupling, it is easy to verify that for each $n$, by definition
\[
M_n = \max_{1 \le j \le n} \{ \widetilde \Pi(j) \} - n.
\] 
The claim thus follows immediately from the definitions of $R_0^+$ and $T_1$.
\end{proof} 

We analyze the Markov chain $M_n$ and the return time $R_0^{+}$ in the next few lemmas.

\begin{lemma}
\label{l:stationary}
The Markov chain $M_n$ is a positive recurrent  Markov chain whose unique stationary distribution $\mu=(\mu_j)_{j\geq 0}$ is given by 
$$ \mu_j=\biggl( 1+\sum_{j=1}^{\infty} \dfrac{q^{j}}{\prod_{k=1}^{j}(1-q^k)}\biggr) ^{-1} \dfrac{q^{j}}{\prod_{k=1}^{j}(1-q^k)} ;~~ j\geq 0.$$
\end{lemma}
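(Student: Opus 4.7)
The plan is to read off the one-step transition kernel of $M_n$ from the recursion $M_n=\max(M_{n-1},Z_n)-1$, verify irreducibility, check by direct substitution that the probability vector $\mu$ in the statement satisfies $\mu=\mu P$, and then invoke the standard theorem that an irreducible chain on a countable state space possessing a stationary probability distribution is positive recurrent with that distribution as its unique invariant law.

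With $Z\sim\mathrm{Geom}(1-q)$ supported on $\{1,2,\ldots\}$, the transitions from $i\ge 1$ are
$$P(i,i-1)=\P(Z\le i)=1-q^{i},\qquad P(i,j)=\P(Z=j+1)=(1-q)q^{j}\ \ (j\ge i),$$
and $P(0,j)=(1-q)q^{j}$ for every $j\ge 0$. Irreducibility is then immediate: from $0$ any state $j$ is reached in one step with probability $(1-q)q^{j}>0$, and from any $i\ge 1$ the chain reaches $0$ after $i$ downward steps with probability at least $\prod_{k=1}^{i}(1-q^{k})>0$.

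Write $c_{j}=q^{j}/\prod_{k=1}^{j}(1-q^{k})$ (with $c_0=1$) and $\mathcal{Z}=\sum_{j\ge 0}c_{j}$; this series converges because $c_{j}\le q^{j}/\prod_{k=1}^{\infty}(1-q^{k})$ and the $q$-Pochhammer product is strictly positive for $0<q<1$. Set $\mu_j=c_j/\mathcal{Z}$. For $j\ge 1$ only the indices $i=j+1$ and $i\le j$ contribute to $\sum_{i}c_{i}P(i,j)$, so
$$\sum_{i}c_{i}P(i,j)=c_{j+1}(1-q^{j+1})+(1-q)q^{j}\sum_{i=0}^{j}c_{i}.$$
The ratio identity $c_{j+1}/c_{j}=q/(1-q^{j+1})$ collapses the first term to $qc_{j}$, so stationarity at $j$ reduces to
$$\sum_{i=0}^{j}c_{i}=\frac{c_{j}}{q^{j}}=\frac{1}{\prod_{k=1}^{j}(1-q^{k})},$$
which I will prove by induction on $j$; the inductive step collapses to $(1-q^{j})+q^{j}=1$ after placing the two summands over the common denominator $\prod_{k=1}^{j}(1-q^{k})$. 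The boundary case $j=0$ of the stationarity equation reads $c_{0}(1-q)+c_{1}(1-q)=1$, which is direct.

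Having produced a stationary probability measure for an irreducible chain on a countable state space, positive recurrence and uniqueness of $\mu$ follow from standard Markov chain theory. The only real work lies in the algebraic identity above, which I expect to be the main (though mild) obstacle; nothing here is probabilistically subtle, and the proof reduces entirely to telescoping a $q$-Pochhammer-type recursion.
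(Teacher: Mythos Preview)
Your proof is correct and follows essentially the same approach as the paper: write down the transition kernel, note irreducibility, verify stationarity of the proposed $\mu$, and conclude positive recurrence from standard theory. The only cosmetic difference is that the paper derives $\mu$ by observing that the stationarity equations force the recursion $\mu_{j+1}=\frac{q}{1-q^{j+1}}\mu_j$ and then solving it, whereas you substitute the given $\mu$ directly and reduce to the partial-sum identity $\sum_{i=0}^{j}c_i=1/\prod_{k=1}^{j}(1-q^k)$, which you dispatch by induction; these are two sides of the same computation.
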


\begin{proof} 
 Let $\pmb{P}=\{P_{i,j}\}_{i,j\geq 0}$ denote the transition matrix of the chain and let $Z$ denote a $\mbox{Geom}(1-q)$ random variable. It is clear from the definition of the chain that for $i\geq 0$ and $j\geq i$ we have  $P_{i,j}=\P(Z=j+1)=q^j(1-q)$; for $i\geq 1$ we have $P_{i,i-1}=\P(Z\leq i)=1-q^i$ and for all other pairs $(i,j)$ we have $P_{i,j}=0$. Clearly the chain is irreducible. It is known from elementary Markov chain theory (see e.g.\ \cite{AldFil99}) that a stationary distribution exists and is unique if and only if there exists a unique probability vector (i.e., vector with non-negative entries whose co-ordinates sum up to 1) $\mu$ solving the set of linear equations $\mu \pmb{P}= \mu$. The equation corresponding to the $j$-th column of the matrix $\pmb{P}$ is given by
\begin{equation}
\label{e:mat0}
\mu_0(1-q)+\mu_1(1-q)=\mu_0 
\end{equation}
for $j=0$ and
\begin{equation}
\label{e:matj}
\sum_{k=0}^{j}\mu_k q^{j}(1-q)+ \mu_{j+1}(1-q^{j+1})=\mu_j 
\end{equation}
for $j>0$.  It is easy to check that any solution of this set of equations must satisfy $\mu_{j+1}=\frac{q}{1-q^{j+1}}\mu_j$, and hence we must have 
$$\mu_{j}= \dfrac{q^{j}}{\prod_{k=1}^{j}(1-q^k)}\mu_0.$$
Since 
$$1+\sum_{j=1}^{\infty} \dfrac{q^{j}}{\prod_{k=1}^{j}(1-q^k)}= \mathcal{Z}(q) < \infty$$
a unique probability vector $\mu$ satisfying the above conditions does indeed exist and is given by 
$$\mu_j = \dfrac{1}{\mathcal{Z}(q)}\dfrac{q^{j}}{\prod_{k=1}^{j}(1-q^k)}.$$
Since the chain is irreducible and has a stationary distribution, it is positive recurrent (see e.g.\ \cite[Theorem 13.4]{AldFil99}).
\end{proof}

\begin{remark}\label{rem:Zq}
In fact, as an anonymous referee has pointed out, the expression for $\mathcal Z(q)$ can be simplified and is given by
\[
\mathcal Z(q) = \frac{1}{\prod_{k=1}^\infty (1-q^k)}.
\]
\end{remark}

The existence of first and second moments of $R_0^{+}$ follows from the above lemma and is proved in Lemma \ref{l:moments}. 
We begin with the following preliminary lemmas. Let $R_i$ denote the time for the chain to reach state $i$. We shall denote by $\E_i$ (resp.\ $\P_i$) the expectation (resp.\ the probability measure) with respect to the chain started at the state $i$ and $\E_\mu$ shall denote the expectation with respect to the chain started at stationarity. 

\begin{lemma}\label{l:dominating}
For all $i \ge 1$, $\E_iR_{i-1}\geq \E_{i+1}R_i$.
\end{lemma}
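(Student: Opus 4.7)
The plan is to derive an exact first-step recursion for $a_i := \E_i R_{i-1}$ and extract from it the identity $a_{i+1} = (1-q^i)\, a_i$, which gives the stated inequality immediately (in fact in a strict form).

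The first thing I would record is the key structural property of the chain: since $M_{n+1} = \max(M_n, Z_{n+1}) - 1 \geq M_n - 1$, the chain decreases by at most one per step. By the strong Markov property this forces, for every $j \geq i$,
\[\E_j R_{i-1} = a_j + a_{j-1} + \cdots + a_i,\]
because to reach $i-1$ starting from $j$ the chain must pass through each intermediate level in turn. Finiteness of every $a_i$ follows from positive recurrence (Lemma \ref{l:stationary}): conditioning on $Z_1$ gives $\E_0 R_0^+ \geq q^j(1-q)\,\E_j R_0$ for every $j \geq 1$, so each $\E_j R_0$ is finite, and hence so is each $a_i$.

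Next I would condition on the first transition out of state $i$. Since $Z_1$ is $\mathrm{Geom}(1-q)$, the chain moves from $i$ to $i-1$ with probability $1 - q^i$ (when $Z_1 \leq i$) and to $j \geq i$ with probability $q^j(1-q)$ (when $Z_1 = j+1$). Substituting the telescoping expression for $\E_j R_{i-1}$ yields
\[a_i = 1 + \sum_{j \geq i} q^j(1-q)\,(a_i + a_{i+1} + \cdots + a_j),\]
together with the analogous identity
\[a_{i+1} = 1 + \sum_{j \geq i+1} q^j(1-q)\,(a_{i+1} + \cdots + a_j).\]
Splitting off the $j = i$ term in the first identity and using $\sum_{j \geq i} q^j(1-q) = q^i$, the difference of the two recursions telescopes to $a_i - a_{i+1} = q^i\, a_i$, i.e., $a_{i+1} = (1 - q^i)\, a_i \leq a_i$, which is the desired inequality.

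The main obstacle is really just bookkeeping in the telescoping subtraction, together with the preliminary verification that the series are absolutely convergent so that rearrangements are legal; neither is a serious hurdle. A pleasant byproduct is the closed form $a_i = a_1 \prod_{k=1}^{i-1}(1-q^k)$, which one might expect to be useful for the finer renewal-time estimates pursued later in the paper.
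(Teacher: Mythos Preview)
Your argument is correct, and it takes a genuinely different route from the paper. The paper couples two copies of the chain using the same geometric variables: starting one at $i$ and one at $i+1$, either both hit their target in one step (when $Z\le i$) or both land at the common state $Z-1$; combined with the elementary observation that $\E_k R_{i-1}\ge \E_k R_i$ for $k\ge i$ (again by coupling), this gives the inequality directly. Your approach instead writes down the exact one-step recursion for $a_i=\E_iR_{i-1}$, exploits the telescoping identity $\E_jR_{i-1}=a_i+\cdots+a_j$ (valid because the chain can drop by at most one), and subtracts to obtain the exact relation $a_{i+1}=(1-q^i)a_i$. The coupling proof is shorter and more robust (it would survive perturbations of the jump law for which your algebra would not), while yours is strictly more informative: you get the closed form $a_i=a_1\prod_{k=1}^{i-1}(1-q^k)$, which in particular shows the inequality is strict and dovetails nicely with the form of the stationary measure in Lemma~\ref{l:stationary}. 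Your justification of finiteness and absolute convergence is adequate: positive recurrence gives $\E_0R_0^+<\infty$, conditioning on the first step bounds each $\E_jR_0$, and since the chain must pass through $i-1$ en route to $0$ one has $a_i\le \E_iR_0<\infty$, after which the series in your recursion equals $a_i-1<\infty$ and all rearrangements are legal.
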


\begin{proof}
If two copies of the chain are both started at $k \ge i$ and coupled using the same set of geometric variables, so that they are identical, then $R_{i-1} > R_i$ and hence $\E_k R_{i-1} \ge \E_k R_i$. Suppose now that we couple two copies of the chain, one started at $i$ and the other at $i+1$ using the same geometric variables. Let $Z$ be a $\mbox{Geom}(1-q)$ variable and suppose that we make one move according to $Z$ in both chains. If $Z \le i$, then the chain started at $i$ reaches $i-1$ and the chain started at $i+1$ reaches $i$. If $Z \ge i+1 $, then both chains go to the state $Z-1$. Thus,
\[
\E_iR_{i-1} = \P(Z \le i) + \sum_{j=i+1}^\infty \P(Z=j) \E_{j-1} R_{i-1} \ge \P(Z \le i) + \sum_{j=i+1}^\infty \P(Z=j) \E_{j-1} R_i =  \E_{i+1}R_i. \qedhere
\]

\end{proof}

\begin{lemma}
\label{l:mumoment}
For the Markov chain $M_n$, $\E_{\mu}R_0<\infty$.
\end{lemma}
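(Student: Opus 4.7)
The plan is to combine Kac's formula, which furnishes finiteness of the mean return time to $0$ via Lemma \ref{l:stationary}, with the monotonicity in Lemma \ref{l:dominating}, which lets me dominate the hitting time $\mathbb{E}_j R_0$ by a linear function of $j$. The result then reduces to checking that the stationary measure $\mu$ has a finite first moment, which follows at once from its explicit form.

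More concretely, set $\tau_k := \mathbb{E}_k R_{k-1}$ so that $\mathbb{E}_j R_0 = \sum_{k=1}^j \tau_k$ for each $j \ge 1$. The first step is to show $\tau_1 < \infty$. By Kac's formula applied to the positive recurrent chain of Lemma \ref{l:stationary}, $\mathbb{E}_0 R_0^+ = 1/\mu_0 < \infty$. A one-step decomposition from state $0$, using $P_{0,m} = q^m(1-q)$ for $m \ge 0$, gives
\[
\frac{1}{\mu_0} = \mathbb{E}_0 R_0^+ = 1 + \sum_{m \ge 1} q^m(1-q)\, \mathbb{E}_m R_0,
\]
and retaining only the $m=1$ term yields $\tau_1 = \mathbb{E}_1 R_0 \le (\mu_0^{-1}-1)/(q(1-q)) < \infty$. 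Lemma \ref{l:dominating} then says that $\tau_k$ is non-increasing in $k$, so $\tau_k \le \tau_1$ for every $k \ge 1$, whence $\mathbb{E}_j R_0 \le j\tau_1$.

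The explicit expression in Lemma \ref{l:stationary}, combined with the elementary bound $\prod_{k=1}^j (1-q^k) \ge \prod_{k=1}^\infty (1-q^k) > 0$ for $0<q<1$, gives a uniform geometric tail $\mu_j \le C(q)\, q^j$. Summing,
\[
\mathbb{E}_\mu R_0 \;=\; \sum_{j \ge 0} \mu_j\, \mathbb{E}_j R_0 \;\le\; \tau_1 \sum_{j \ge 0} j\, \mu_j \;\le\; C(q)\,\tau_1 \sum_{j \ge 0} j\, q^j \;<\; \infty.
\]
The only mildly subtle point is turning the positive recurrence of the chain into the finiteness of $\tau_1$, for which the one-step analysis from $0$ is crucial since it converts Kac's diagonal identity into control on a non-diagonal mean hitting time. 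Everything else is explicit geometric summation.
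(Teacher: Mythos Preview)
Your proof is correct and follows essentially the same route as the paper: decompose $\E_j R_0 = \sum_{k=1}^j \E_k R_{k-1}$, use Lemma~\ref{l:dominating} to bound each summand by $\E_1 R_0$, and then combine with $\sum_j j\mu_j < \infty$ from the explicit geometric tail of $\mu$. The only difference is cosmetic: where the paper simply invokes positive recurrence to assert $\E_1 R_0 < \infty$, you extract it explicitly from Kac's formula via the one-step decomposition from state $0$.
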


\begin{proof}
\begin{eqnarray*}
\E_{\mu}R_0 &=& \sum_{j=1}^{\infty} \mu_j\E_jR_0\\
&=& \sum_{j=1}^{\infty}\mu_j \sum_{k=1}^{j} \E_{k}R_{k-1}\\
\mathrm{(by \ Lemma \ \ref{l:dominating})} &\leq &  \sum_{j=1}^{\infty}j\mu_j \E_1R_0 <\infty.
\end{eqnarray*}
The last inequality can be justified as follows. The positive recurrence of the chain $M_n$ implies that $E_1R_0$ is finite. Further, Lemma \ref{l:stationary} shows that $\sup_{j} \mu_j/ q^{j}$ is finite (this can be verified by either noting that $\frac{1}{\prod_{k=1}^{\infty}(1-q^{k})} >0$ or by using the formula in Remark \ref{rem:Zq}) and hence $\sum_{j}j \mu_j$ is finite. 
\end{proof}

\begin{lemma}
\label{l:moments}
Let $R_0^+$ be as defined in Lemma \ref{l:mc}. Then we have $\E_0 R_0^{+}< \infty$ and $\E_0 (R_0^{+})^2 < \infty.$
\end{lemma}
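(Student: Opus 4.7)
Both statements follow from Kac-type identities, once the groundwork from Lemmas~\ref{l:stationary}--\ref{l:mumoment} is in place. For the finiteness of the first moment, the plan is simply to invoke Kac's classical formula for positive recurrent Markov chains: since $M_n$ is positive recurrent with $\mu_0 > 0$, we get $\E_0 R_0^+ = 1/\mu_0 < \infty$ immediately.

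For the second moment, the plan is to establish the identity
\[
\E_0 (R_0^+)^2 \;=\; \frac{1 + 2\,\E_\mu R_0}{\mu_0},
\]
whose right-hand side is finite by Lemmas~\ref{l:stationary} and~\ref{l:mumoment}. The derivation proceeds from the standard representation of $\mu$ as the normalized expected occupation measure of an excursion from $0$,
\[
\mu_j \;=\; \frac{1}{\E_0 \tau}\, \E_0 \sum_{n=0}^{\tau - 1} \mathbf{1}_{\{M_n = j\}}, \qquad \tau := R_0^+.
\]
Multiplying by $\E_j R_0$ (which vanishes at $j=0$), summing over $j$, and applying the Markov property at each time $n$---so that on the event $\{n < \tau\}$ the residual time $\tau - n$ has conditional mean $\E_{M_n} R_0$ given $M_n$---one obtains
\[
\E_\mu R_0 \cdot \E_0 \tau \;=\; \E_0 \sum_{n=1}^{\tau - 1} (\tau - n) \;=\; \E_0 \frac{\tau(\tau-1)}{2}.
\]
Combined with Kac's formula $\E_0 \tau = 1/\mu_0$, this rearranges to the claimed identity.

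Overall the proof should be short, using only standard Markov chain theory once the previous lemmas are in place, and I do not foresee any serious obstacle. A natural alternative would be to decompose $R_0^+$ as $1$ plus an independent sum $\sum_{k=1}^{M_1} T_k$ of downcrossing times, where $T_k$ is the hitting time of $k-1$ from $k$, and then to establish uniform second moment bounds $\sup_k \E T_k^2 < \infty$ (for instance by extending the coupling argument behind Lemma~\ref{l:dominating} from first to second moments). That route is available but looks noticeably more tedious than the occupation-measure calculation sketched above, which is why the Kac-type identity is the preferred strategy.
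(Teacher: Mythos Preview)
Your proposal is correct and takes essentially the same approach as the paper: both use Kac's formula $\E_0 R_0^{+}=\mu_0^{-1}$ for the first moment and the identity $\E_0 (R_0^{+})^2=(2\E_{\mu}R_0+1)/\mu_0$ together with Lemma~\ref{l:mumoment} for the second. The paper simply cites this second-moment identity from \cite{AldFil99} rather than deriving it, but otherwise the arguments coincide.
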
 

\begin{proof}
It is a basic fact about Markov chains that (see e.g. \cite{AldFil99}) that $\E_0 R_0^{+}= \mu_0^{-1}$. From the proof of Lemma \ref{l:stationary}, we have $\mu_0^{-1} = \mathcal{Z}(q)< \infty$. For the second moment we argue as follows. Let $R_0$ denote the time of the first visit of the Markov chain to the state $\{0\}$.  It is a consequence of Kac's formula (Corollary 2.24, \cite{AldFil99}) that  (see (2.21) in \cite{AldFil99})
\begin{equation}
\label{e:kac}
\E_0 (R_0^{+})^2=\dfrac{2\E_{\mu}R_0+1}{\mu_0}.
\end{equation}
The claim follows by Lemma \ref{l:mumoment} since $\E_{\mu}R_0 < \infty$.
\end{proof}

We shall also need the following tail estimates for the return time to prove Theorem \ref{t:lln}.

\begin{proposition}
\label{l:return}
Let $0<q<1$ and consider the Markov chain $M_n$ as defined above. There exist positive constants $A=A(q)$ and $c=c(q)$ such that for all $t\geq 0$ and $s\geq 0$, we have 
\begin{equation}
\label{e:returntail}
\P_t[R_0^{+}>10t+s]\leq Ae^{-cs}.
\end{equation}
More generally, denoting the first return time to or below $v\geq 0$ by $R_v^{+}$ we have
$$\P_{t+v}[R_v^{+}>10t+s]\leq Ae^{-cs}.$$
\end{proposition}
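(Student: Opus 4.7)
The plan is to decompose $R_v^+$ into a sum of $t$ independent consecutive-level descent times, stochastically dominate each by a single random variable $\sigma_1$ (the time to go from state $1$ to $0$), establish that $\sigma_1$ has a positive exponential moment, and conclude via a Chernoff estimate. I expect the exponential-moment step to be the main obstacle, as the natural self-consistency inequality degenerates at $q = 1/2$.

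Since the chain $M_n$ decreases by at most $1$ per step, starting at $t+v$ with $t \ge 1$ the chain must successively reach $v+t-1, v+t-2, \ldots, v$. By the strong Markov property applied at these hitting times, $R_v^+$ is distributed as $\sum_{i=1}^t \sigma_{v+i}$, where $\sigma_k$ denotes the first-passage time from $k$ to $k-1$ and the summands are independent (the boundary case $t=0$ is handled by an analogous first-step analysis). I would next upgrade Lemma~\ref{l:dominating} to the stochastic domination $\sigma_k \preceq_{\mathrm{st}} \sigma_1$ for every $k \ge 1$. To see this, couple the chains $A$ and $B$ starting at $k$ and $1$, respectively, driven by the same geometric sequence $\{Z_n\}$: if $Z_1 > k$, then both chains jump to the common state $Z_1 - 1 \ge k$ and coincide from then on, so $A$ reaches $k-1$ at least as early as $B$ reaches $0$ since $k-1 \ge 0$; if $Z_1 \le k$, then $\sigma_k = 1$ and trivially $\sigma_k \le \sigma_1$. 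In either case $\sigma_k \le \sigma_1$ pointwise under the coupling, yielding the stochastic domination.

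The crux is to show $\phi(\alpha) := \E[e^{\alpha \sigma_1}] < \infty$ for some $\alpha > 0$. Since the chains starting at $0$ and at $1$ coincide after the first step, $\sigma_1$ has the same distribution as $R_0^+$ starting at $0$, and a first-step decomposition gives $\sigma_1 \stackrel{d}{=} 1 + \sum_{i=1}^{Z_1-1}\widetilde{\sigma}_i$, where the $\widetilde{\sigma}_i$ are independent, $\widetilde{\sigma}_i \stackrel{d}{=} \sigma_i$, and the sum is empty when $Z_1 = 1$. Bounding $\widetilde{\sigma}_i \preceq \sigma_1$ and summing the MGFs yields
\[
\phi(\alpha) \;\le\; \frac{(1-q)\,e^\alpha}{1 - q\,\phi(\alpha)},
\]
equivalently $q\phi(\alpha)^2 - \phi(\alpha) + (1-q)e^\alpha \ge 0$, admitting the bounded solution $\phi(\alpha) \le \frac{1 - \sqrt{1 - 4q(1-q)e^\alpha}}{2q}$ provided $e^\alpha \le 1/(4q(1-q))$. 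This range is nontrivial exactly when $q \ne 1/2$; at the borderline $q = 1/2$ one must use a sharper recursion exploiting that $\phi_k(\alpha) := \E[e^{\alpha \sigma_k}]$ is close to $e^\alpha$ for large $k$ (indeed $\P(\sigma_k = 1) = 1-q^k \to 1$), bootstrapping the uniform estimate $\phi_k(\alpha) \le e^\alpha(1+Cq^k)$ with $C = (\phi(\alpha) - 1)/(1-q\phi(\alpha))$.

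Given $\phi_i(\alpha) \le e^\alpha(1 + Cq^i)$, the Markov exponential inequality gives
\[
\P_{t+v}(R_v^+ > 10t+s) \;\le\; e^{-\alpha(10t+s)} \prod_{i=v+1}^{v+t} \phi_i(\alpha) \;\le\; e^{-\alpha(10t+s)}\, e^{\alpha t} \exp\!\Bigl(\tfrac{Cq}{1-q}\Bigr) \;\le\; A\, e^{-\alpha s},
\]
with $A = \exp(Cq/(1-q))$, using $\prod_{i \ge 1}(1+Cq^i) \le \exp(Cq/(1-q))$ and $e^{-9\alpha t} \le 1$; this yields the proposition with $c = \alpha$ (and shows that the constant $10$ may in fact be replaced by any number strictly greater than $1$, at the expense of the constants).
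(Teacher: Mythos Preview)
Your overall architecture---decomposing $R_v^{+}$ into independent one-level descent times $\sigma_k$, stochastically dominating each by $\sigma_1$, and finishing with Chernoff---is sound and is genuinely different from the paper's route. The paper instead shows (Lemma~\ref{l:lt1}) that $e^{\alpha M_\ell}$ is a strict supermartingale while the chain is above a threshold $C_1$, which gives exponential tails for the time to reach level $C_1$; it then (Lemma~\ref{l:lt2}) controls the local time below $C_1$ and argues that many independent chances of a run of $C_1$ consecutive geometrics equal to $1$ force a hit at $0$. Your decomposition is cleaner and, if completed, immediately shows that the constant $10$ can be replaced by anything exceeding $1$.

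However, the step you flag as the obstacle is a real gap, and it is broader than you indicate. The recursive inequality
\[
\phi(\alpha)\le \frac{(1-q)e^{\alpha}}{1-q\phi(\alpha)}
\]
was derived under the hypothesis $q\phi(\alpha)<1$; it is perfectly consistent with $\phi(\alpha)=\infty$, so by itself it does not prove finiteness for any $\alpha>0$. Moreover, the conclusion $\phi(\alpha)\le \frac{1-\sqrt{1-4q(1-q)e^{\alpha}}}{2q}$ is the \emph{smaller} root of the quadratic. For $q>\tfrac12$ that root equals $(1-q)/q<1$ at $\alpha=0$, contradicting $\phi(0)=1$; thus for $q>\tfrac12$ the inequality only places $\phi$ above the larger root and gives no upper bound at all. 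So the problem is not confined to $q=\tfrac12$. Finally, your proposed bootstrap $\phi_k(\alpha)\le e^{\alpha}(1+Cq^{k})$ with $C=(\phi(\alpha)-1)/(1-q\phi(\alpha))$ presupposes both $\phi(\alpha)<\infty$ and $q\phi(\alpha)<1$, which is exactly what is to be proved.

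A clean, non-circular way to close the gap is to prove directly that $\sigma_1$ has an exponential moment via a drift/Lyapunov argument: for $\alpha>0$ small and $M_\ell$ large one has $\E[e^{\alpha M_{\ell+1}}\mid M_\ell]\le e^{-\beta}e^{\alpha M_\ell}$ for some $\beta>0$, whence $\P_t(R_{C_1}^{+}>n)$ decays exponentially in $n$ uniformly once $t\le $ a fixed level; combined with a finite-state argument below that level this yields $\E e^{\alpha\sigma_1}<\infty$. This is essentially the paper's Lemma~\ref{l:lt1}, after which your refined bound $\phi_k(\alpha)\le e^{\alpha}(1+Cq^{k})$ and the final Chernoff step go through as written.
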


First we show that it suffices to only prove the first statement in the above proposition. To see this notice the following. If we couple two copies of the chain $M_n$ and $M'_n$ with $M_0=t$ and $M'_0=t+v$ using the same sequence $\{Z_i\}$ of Geometric variables, then we have that $M'_n-M_{n}$ does not increase with $n$ and in particular,  $\min\{n\geq  \ell \geq 1: M_{\ell}\}\geq \min\{n\geq  \ell \geq 1: M'_{\ell}-v\}$. Hence the return time to $0$ in $M_n$ is at least as large as the return time to $v$ in $M'_n$ showing that it is sufficient to establish \eqref{e:returntail}. We shall prove \eqref{e:returntail} using the following estimates.
%



\begin{lemma}
\label{l:lt1}
Fix $0<q<1$. Let $C_1=C_1(q)$ be sufficiently large such that $\frac{q^{C_1}}{1-q}<\frac{1}{10}$. There exist positive constants $A=A(q)>1$ and $c=c(q)$ such that for any $t\geq C_1$ and $s\geq 0$
$$\P_t[R_{C_1}^{+}>10t+s/2] \leq Ae^{-cs}.$$
\end{lemma}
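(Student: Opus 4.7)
The proof rests on the observation that, for $k \ge C_1$, the chain $M_n$ has strong negative drift. Indeed, a direct computation of the one-step displacement from state $k$ yields
\[
\E_k[M_1 - k] = -(1 - q^k) + q^k \cdot \frac{q}{1-q} = -1 + \frac{q^k}{1-q},
\]
which is bounded above by $-9/10$ for all $k \ge C_1$ by the hypothesis on $C_1$. Starting from $M_0 = t$, the chain is therefore expected to hit $C_1$ in roughly $10t/9$ steps, so the window $10t + s/2$ leaves ample room for fluctuations of order $s$. The plan is to quantify this into exponential tails via a standard Chernoff-type argument driven by an exponential supermartingale.

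First I would compute the one-step moment generating function of the jump $D_k := M_1 - M_0$ under $\P_k$. A short calculation using $\P_k(D_k = -1) = 1-q^k$ and $\P_k(D_k = j) = (1-q)q^{k+j}$ for $j \ge 0$ yields
\[
\E_k\bigl[e^{\theta D_k}\bigr] = e^{-\theta} + q^k \cdot \frac{1-e^{-\theta}}{1-qe^{\theta}}, \qquad 0 < \theta < \log(1/q).
\]
This is decreasing in $k$, so for every $k \ge C_1$ the MGF is bounded by $\rho(\theta) := e^{-\theta} + q^{C_1}(1-e^{-\theta})/(1-qe^\theta)$. Note that $\rho(0) = 1$ and $\rho'(0) = -1 + q^{C_1}/(1-q) < -9/10$. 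Consequently one can fix $\theta = \theta(q) > 0$ small enough to enforce not merely $\rho(\theta) < 1$ but the stronger calibration $e^{\theta}\rho(\theta)^{10} \le 1$. This is possible because the $\theta$-derivative of $e^\theta \rho(\theta)^{10}$ at $0$ equals $1 + 10\rho'(0) < -8$, so the function lies below $1$ on a neighborhood of $0$.

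With such $\theta$ fixed and $\tau := R_{C_1}^+$, the stopped process
\[
W_n := e^{\theta M_{n \wedge \tau}}\,\rho(\theta)^{-(n \wedge \tau)}
\]
is a supermartingale, since on $\{\tau > n\}$ we have $M_n \ge C_1$ and the one-step MGF bound above applies. Applying Markov's inequality at the deterministic time $N := 10t + s/2$ and using $M_N \ge C_1 + 1$ on the event $\{\tau > N\}$ gives
\[
\P_t(\tau > N) \le e^{\theta(t - C_1 - 1)}\rho(\theta)^N = e^{-\theta(C_1+1)}\cdot\bigl(e^\theta \rho(\theta)^{10}\bigr)^{t}\cdot \rho(\theta)^{s/2} \le e^{-\theta(C_1+1)}\,\rho(\theta)^{s/2}.
\]
This is the desired bound of the form $Ae^{-cs}$, with $c := \tfrac{1}{2}\log(1/\rho(\theta)) > 0$ and any $A \ge \max\bigl(1,e^{-\theta(C_1+1)}\bigr)$. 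The technical heart of the argument is the calibration of $\theta$ so that the $t$-dependent factor $(e^\theta \rho(\theta)^{10})^t$ is uniformly bounded, and this is exactly where the hypothesis $q^{C_1}/(1-q) < 1/10$ is invoked; once that calibration is achieved, the remainder is routine.
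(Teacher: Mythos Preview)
Your proof is correct and takes essentially the same approach as the paper's: both compute the one-step exponential moment of the chain above level $C_1$, use the hypothesis $q^{C_1}/(1-q)<1/10$ to calibrate a small $\theta$ (equivalently $\alpha$) so that this moment is at most $e^{-\theta/10}$ (your condition $e^{\theta}\rho(\theta)^{10}\le 1$ is exactly this), and then iterate to get the exponential tail. Your write-up is somewhat more explicit in framing the argument via a stopped supermartingale and in tracking the final constants, but the underlying idea is identical.
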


\begin{proof}
Consider the Markov chain $M_n$ with $M_0=t$. Let $\alpha>0$ be a constant that we will choose to be sufficiently small. Observe that 
we have for $qe^{\alpha}<1$,
\begin{eqnarray*}
\E\left(e^{\alpha M_{\ell+1}}\mid M_{\ell}\right) &=& (1-q^{M_{\ell}})e^{\alpha(M_{\ell}-1)}+\frac{q^{M_{\ell}}(1-q)}{1-qe^{\alpha}}e^{\alpha M_{\ell}}\\
&= & e^{\alpha M_{\ell}} \left( e^{-\alpha}(1-q^{M_{\ell}})+ \frac{q^{M_{\ell}}(1-q)}{1-qe^{\alpha}} \right).
\end{eqnarray*}

Since $M_{\ell}>C_1$ on the event $\{\ell < R_{C_1}^{+}\}$ it follows by using this that $\frac{q^{C_1}}{1-q}<\frac{1}{10}$, by choosing $\alpha$ sufficiently small one can make the quantity in the parenthesis above less than $e^{-\alpha/10}$ (for small $\alpha$, it is asymptotically $e^{-9\alpha/10}$). It follows that 
$$\E\left(e^{\alpha M_{10t+s/2}}\mathbf{1}_{\{ R_{C_1}^{+}>10t+s/2\}}\mid M_{0}=t\right)\leq e^{\alpha t}e^{-\alpha(10t+s/2)/10}.$$ 
The lemma now follows.
\end{proof}

Let $\mathcal{L}_x(t)$ denote the time the chain $M_n$ spends at or below $x$ up to time $t$. We have the following estimate.
\begin{lemma}
\label{l:lt2}
Fix $r\leq C_1$ where $C_1$ is as above. Then there exist constants $C_2=C_2(q)>0$ and $c>0$ such that
$$\P_r[\mathcal{L}_{C_1}(s/2)< \frac{s}{C_2(q)}]\leq A_1e^{-cs}.$$
\end{lemma}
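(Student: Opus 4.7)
The plan is to decompose the trajectory of $\{M_n\}$ into successive excursions starting and ending in the ``good'' set $\{0,1,\ldots,C_1\}$, obtain a uniform exponential moment bound on the excursion lengths via Lemma \ref{l:lt1}, and then apply a Chernoff-type inequality to control how many such excursions can fit into time $s/2$. Set $\tau_0 \defeq 0$ and, for $i \ge 1$, $\tau_i \defeq \inf\{n > \tau_{i-1} : M_n \le C_1\}$, and let $\xi_i \defeq \tau_i - \tau_{i-1}$. Since $M_0 = r \le C_1$, we have $M_{\tau_i} \le C_1$ for every $i \ge 0$, so the local time satisfies $\mathcal{L}_{C_1}(s/2) = |\{i \ge 0 : \tau_i \le s/2\}|$. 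Consequently,
$$\bigl\{ \mathcal{L}_{C_1}(s/2) < n \bigr\} \;=\; \Bigl\{\textstyle\sum_{i=1}^{n-1} \xi_i > s/2\Bigr\},$$
and it suffices to bound the probability of the right-hand side for $n$ of order $s/C_2$.

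The key analytic step is to establish a uniform conditional exponential moment: there exist $\beta = \beta(q) > 0$ and $K = K(q) < \infty$ such that
$$\E\bigl[e^{\beta \xi_i} \,\big|\, M_{\tau_{i-1}} = j \bigr] \le K \qquad \text{for every } j \le C_1.$$
From any state $j \le C_1$ the chain either stays in $\{0,\ldots,C_1\}$ at the next step (probability $1 - q^{C_1+1}$, since this requires $Z \le C_1 + 1$) or jumps to a state $Z - 1 \ge C_1 + 1$ where, conditionally, $Z$ has shifted geometric law on $\{C_1+2, C_1+3, \ldots\}$. In the latter case $\xi_i = 1 + R_{C_1}^{+}$ starting from this post-jump state. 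Integrating the tail estimate of Lemma \ref{l:lt1} gives $\E_t\bigl[e^{\beta R_{C_1}^{+}}\bigr] \le C(\beta)\,e^{10\beta t}$ for every $t \ge C_1$ and every $\beta$ strictly smaller than the exponential rate appearing there, and averaging $e^{10\beta(Z-1)}$ against the shifted-geometric distribution of $Z$ yields a finite bound provided $10\beta < \log(1/q)$. Choosing $\beta$ below both thresholds gives the required constant $K$.

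With the uniform moment bound in hand, the tower property applied $n-1$ times yields $\E\bigl[e^{\beta \sum_{i=1}^{n-1}\xi_i}\bigr] \le K^{n-1}$, so Markov's inequality gives
$$\P_r\Bigl[\textstyle\sum_{i=1}^{n-1} \xi_i > s/2\Bigr] \le e^{-\beta s/2}\,K^{n-1}.$$
Setting $n = \lceil s/C_2 \rceil + 1$ with $C_2 > 2(\log K)/\beta$ makes the right-hand side at most $A_1 e^{-cs}$ for a suitable $c > 0$, which combined with the displayed equality above yields the claim. The main obstacle is establishing the uniform exponential moment: Lemma \ref{l:lt1} only bounds the return time with an affine shift $10t$ depending on the post-jump height, and this must be beaten against the geometric tail of that height, which is precisely what pins down the admissible range of $\beta$ and hence the threshold value of $C_2$.
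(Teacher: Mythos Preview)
Your argument is correct and follows essentially the same route as the paper: decompose the trajectory into returns to $\{0,\dots,C_1\}$, use Lemma~\ref{l:lt1} to produce an exponential moment for each excursion length, and finish with a Chernoff bound. The only cosmetic difference is that the paper first couples to reduce to $r=C_1$ and then asserts that the excursions above $C_1$ are i.i.d.\ (which is justified since the entry point into $\{C_1+1,C_1+2,\dots\}$ from any state $j\le C_1$ has the same law), whereas you bypass the coupling and instead carry a uniform conditional bound $\E[e^{\beta\xi_i}\mid M_{\tau_{i-1}}=j]\le K$ through the tower property; your version makes the dependence structure and the constraint $10\beta<\log(1/q)$ explicit.
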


\begin{proof}
For two copies of the chain $M_n$ and $M'_{n}$ started at $a$ and $b$ respectively with $a\leq b$, the chains can be coupled so that $M_n \le M'_{n}$, and hence without loss of generality we may assume $r=C_1$. Now let $\xi_1, \xi_2, \ldots$ denote the lengths of a sequence of independent excursions above $C_1$. Hence it suffices to show that for $C_2$ sufficiently large 
$$\P[\sum_{i=1}^{s/C_2}\xi_i > s/2]\leq Ae^{-cs}.$$
This in turn follows by observing that by Lemma \ref{l:lt1} we have $\E e^{\alpha \xi_i} <\infty$ for $\alpha$ sufficiently small.
\end{proof}

Now we are ready to prove Proposition \ref{l:return}.

\begin{proof}[Proof of Proposition \ref{l:return}]

Let $C_1,C_2$ be as in the above two lemmas. It follows from Lemma \ref{l:lt1} that it suffices to prove 
$$\P_{C_1}[R_0^{+}> \frac{s}{2}]\leq Ae^{-cs}$$
for some positive constants $A$ and $c$ for $s$ sufficiently large.
For $i=1,2,\ldots , s/4C_1$, denote the interval $[(2i-2)C_1, (2i-1)C_1)$ (resp.\ $[(2i-1)C_1, 2iC_1)$) by $J_{i}$ (resp.\ $J_i^{*}$). For each $i$, let $Z^{(i)}_{j} Z^{(i,*)}_j, j=1,2,\ldots, C_1$ denote independent sequences of i.i.d.\ $\mbox{Geom}(1-q)$ variables. For the chain $M_n$, define
$$\tau_i= \min\{n\in J_i: M_n\leq C_1\};\quad  \tau^{*}_i= \min\{n\in J^*_i: M_n\leq C_1\}.$$
Let $\mathcal{A}_i$ denote the event 
$$\mathcal{A}_i=\left\{\{i:\tau_{i}<\infty\}< \frac{s}{4C_1C_2}\right\}$$
and define the event $\mathcal{A}_i^{*}$ similarly by replacing $\tau_i$ by $\tau_i^{*}$.
Observe that from Lemma \ref{l:lt2} it follows that 
$$\P_{C_1}[\mathcal{A}_{i}\cap \mathcal{A}_i^{*}]\leq Ae^{-cs}.$$
Let $B$ (resp.\ $B^{*}$) denote the event that $R_0^{+}>\frac{s}{2}$ together with the complement of $\mathcal{A}_i$ (resp.\ the complement of $\mathcal{A}_i^{*}$). Clearly using the above display it suffices to show 
$$\P[B]+\P[B^{*}]\leq Ae^{-cs}$$
for some positive constants $A$ and $c$ for $s$ sufficiently large. Run the chain $M_n$ as follows. Let 
$$\tau_i= \min\{n\in J_i: M_n\leq C_1\}.$$
If $\tau_i<\infty$, then use the variables  $Z^{(i)}_{j}$ to run the chain for the next $C_1$ steps, and use independent external randomness to run the chain for other steps. Call $i$ \textbf{good} if $Z^{(i)}_j=1$ for all $j=1,2,\ldots, C_1$. Clearly if for some $i$, $\tau_i<\infty$ and $i$ is good then $R_0^{+}\leq s/2$. Now observe that $\P[i~\text{is good}]= (1-q)^{C_1}=d(q)>0$ and also observe that on the complement of $\mathcal{A}_i$ we have 
$$\#\{i:\tau_{i}<\infty\}>\frac{s}{4C_1C_2}.$$
It follows that 
$$\P_{C_1}[B]\leq (1-d(q))^{s/4C_1C_2}.$$
Arguing similarly with replacing  $J_i, \tau_i$ and $Z^{(i)}_j$ by $J_i^*, \tau_i^{*}$ and $Z^{(i,*)}_j$ respectively gives us
the same upper bound for $\P_{C_1}[B^*]$. The proposition now follows by noting 
$$\P_{C_1}[R_0^{+}> \frac{s}{2}]\leq \P_{C_1}[B]+P_{C_1}[B^*]+ \P_{C_1}[\mathcal A_i\cap \mathcal A_i^*].$$

\end{proof}

\section{Anscombe's Theorem and a CLT for the length of the LIS}
\label{s:anscombe}
In this section we complete the proof of Theorem \ref{t:clt} by invoking a central limit theorem for a random sum due to Anscombe. Let $X_1,X_2 \cdots$ be i.i.d.\ random variables with finite mean and variance $\sigma^2 >0$ and let $N(t)$ be an integer-valued process defined on the same probability space as the $X_i$. Anscombe's Theorem \cite{Ans52} says that if the partial sums $Q_n$ for the $\{X_i\}$ obey a central limit theorem and do not fluctuate too much, then the random sum $Q_{N(t)}$ also obeys the central limit theorem.

\begin{theorem}[Anscombe's Theorem, e.g. \cite{Gut06}]
\label{t:anscombe}
Let $X,X_1,X_2,\ldots$ be independent, identically distributed random variables with mean $0$ and positive, finite variance $\sigma^2$. For $n \ge 1$, let $Q_n = \sum_{i=1}^n X_i$. Suppose $\{N(t), t\ge 0\}$ is a family of positive, integer values random variables such that for some $0<c < \infty$,
\[
\frac{N(t)}{t} \stackrel{p}{\to} c \mathrm{ \ \ as \ } t \to \infty. 
\] 
Then,
\[
\frac{Q_{N(t)}}{\sqrt{ t}} \stackrel{d}{\to} \mathcal N(0,c\sigma^2) \mathrm{ \ \ as \ } t \to \infty.
\]
\end{theorem}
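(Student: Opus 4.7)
The plan is to reduce the random-index central limit theorem to the deterministic-index one by means of Slutsky's theorem, using the hypothesis $N(t)/t \xrightarrow{p} c$ together with a uniform control on the oscillations of the partial sum process in a window around the deterministic time $\lfloor ct\rfloor$.

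First, by the classical Lindeberg-L\'evy central limit theorem applied to the i.i.d.\ sequence $\{X_i\}$, we have $Q_n/(\sigma\sqrt{n}) \Rightarrow \mathcal{N}(0,1)$, and hence (since $\lfloor ct\rfloor/t \to c$) also
\[
\frac{Q_{\lfloor ct\rfloor}}{\sqrt{t}} \Rightarrow \mathcal{N}(0,c\sigma^2) \quad \text{as } t \to \infty.
\]
The remaining task is to show that the replacement of the deterministic index $\lfloor ct\rfloor$ by the random index $N(t)$ costs only $o_p(\sqrt{t})$, that is,
\[
\frac{Q_{N(t)} - Q_{\lfloor ct\rfloor}}{\sqrt{t}} \xrightarrow{p} 0,
\]
after which Slutsky's theorem concludes the argument.

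To establish this last display, I would fix $\varepsilon > 0$ and $\delta > 0$ and use $N(t)/t \xrightarrow{p} c$ to see that the event $E_t := \{|N(t) - \lfloor ct\rfloor| \leq \varepsilon t\}$ has probability tending to $1$. On $E_t$ the difference $|Q_{N(t)} - Q_{\lfloor ct\rfloor}|$ is bounded by the maximum oscillation of the centered random walk over a window of length at most $\varepsilon t$ around $\lfloor ct\rfloor$. By Kolmogorov's maximal inequality (or equivalently Doob's $L^2$ inequality applied to the martingale of partial sums), for each fixed $\varepsilon$,
\[
\P\!\left(\max_{|k-\lfloor ct\rfloor|\leq \varepsilon t} |Q_k - Q_{\lfloor ct\rfloor}| > \delta\sqrt{t}\right) \leq \frac{2\sigma^2 \varepsilon t}{\delta^2 t} = \frac{2\sigma^2 \varepsilon}{\delta^2}.
\]
Combining this with $\P(E_t^c) \to 0$ gives
\[
\limsup_{t\to\infty} \P\!\left(\frac{|Q_{N(t)} - Q_{\lfloor ct\rfloor}|}{\sqrt{t}} > \delta\right) \leq \frac{2\sigma^2 \varepsilon}{\delta^2},
\]
and since $\varepsilon > 0$ is arbitrary, the left-hand side is in fact $0$.

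The main obstacle in this plan is the uniform fluctuation bound, i.e.\ controlling $Q_k - Q_{\lfloor ct \rfloor}$ simultaneously over the entire random window, rather than at a single random endpoint. This is precisely the content of "Anscombe's condition" and is what forces us to invoke a maximal inequality instead of the CLT alone. Beyond this step, the argument is essentially bookkeeping with Slutsky's theorem. Note that no rate of convergence is claimed or needed for $N(t)/t \to c$: the scheme works with convergence in probability alone, which is all that will be available when we take $N(t) = S_n$ in the application to Theorem~\ref{t:clt}.
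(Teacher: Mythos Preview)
Your argument is correct and is precisely the standard proof of Anscombe's theorem: reduce to the deterministic-index CLT via Slutsky, and control the discrepancy $Q_{N(t)}-Q_{\lfloor ct\rfloor}$ by combining $N(t)/t\xrightarrow{p}c$ with Kolmogorov's maximal inequality over a window of width $O(\varepsilon t)$.

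There is nothing to compare against, however: the paper does not supply its own proof of this statement. Theorem~\ref{t:anscombe} is quoted as a classical result with a reference to \cite{Gut06} (and to Anscombe's original paper \cite{Ans52}) and is then used as a black box to derive the regenerative CLT (Theorem~\ref{t:regen}). So your write-up goes beyond what the paper does here, but it is sound and in fact matches the textbook argument one finds in the cited reference.
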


To apply Anscombe's theorem in our context, we make use of the following concentration result. Recall the regenerative process from Section \ref{s:mcregen} with inter-renewal times $X_i$. Recall $S_n=\min\{j:\sum_{i=1}^j X_i \geq n\}$.

\begin{lemma}\label{l:Snconc}
For $\mu_0$ as defined in the previous section,
\[
\frac{S_n}{n} \stackrel{{\rm a.s.}}{\to} \mu_0.
\]
\end{lemma}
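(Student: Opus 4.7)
The plan is to invoke the strong law of large numbers together with the standard renewal-theoretic sandwich argument. Recall that the inter-renewal times $X_i$ are i.i.d.\ with the same distribution as $R_0^+$, and by Lemma \ref{l:moments} together with the identity $\E_0 R_0^+ = \mu_0^{-1}$ (which is a basic fact about positive recurrent Markov chains, quoted in the proof of Lemma \ref{l:moments}), we know that $\E X_1 = \mu_0^{-1} \in (0,\infty)$.

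First I would apply the strong law of large numbers to $T_k = \sum_{i=1}^k X_i$, which gives
\[
\frac{T_k}{k} \xrightarrow{\text{a.s.}} \mu_0^{-1} \quad \text{as } k \to \infty.
\]
Next, since each $X_i$ is finite almost surely, we have $S_n \to \infty$ almost surely as $n \to \infty$. By the definition of $S_n$ we have the sandwich
\[
T_{S_n - 1} < n \leq T_{S_n},
\]
so dividing through by $S_n$ yields
\[
\frac{T_{S_n - 1}}{S_n - 1}\cdot\frac{S_n - 1}{S_n} < \frac{n}{S_n} \leq \frac{T_{S_n}}{S_n}.
\]

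Finally, I would note that $S_n \to \infty$ a.s.\ allows us to evaluate the SLLN limit along the (random) subsequence $\{S_n\}$, so both the upper and lower bounds converge to $\mu_0^{-1}$ almost surely. Taking reciprocals gives $S_n/n \to \mu_0$ almost surely, as claimed. There is no real obstacle here: the argument is entirely the standard elementary renewal theorem in its almost sure form, with all integrability input supplied by Lemma \ref{l:moments}.
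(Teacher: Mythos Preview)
Your argument is correct and is essentially the same as the paper's: both use the sandwich $T_{S_n-1} < n \le T_{S_n}$, divide by $S_n$, and apply the strong law of large numbers (with mean $\E X_1=\mu_0^{-1}$) to conclude. You are slightly more explicit about why $S_n\to\infty$ a.s.\ and where the value $\mu_0^{-1}$ comes from, but the approach is identical.
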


\begin{proof}
Observe that 
$$\frac{ \sum_{j=1}^{S_n-1} X_{j}}{S_n} \leq \frac{n}{S_n}\leq  \frac{ \sum_{j=1}^{S_n} X_{j}}{S_n}.$$
As $n\to \infty$, by strong law, both the left and right hand sides of the above inequality converges to $\mu_0^{-1}$, hence the lemma.
\end{proof}

Using Theorem \ref{t:anscombe} and Lemma \ref{l:Snconc}. we can show the following regenerative version of the Central Limit Theorem (see e.g. \cite[Chapter 2, Theorem 65]{Ser09}), we omit the proof.

\begin{theorem}[Regenerative CLT]
\label{t:regen} 
Let $(X_i,Y_i)_{i\geq 1}$ and $S_n$ be as defined in \S~\ref{s:mcregen}. Define $a:=\mu_0 \E Y_1 < \infty$. Suppose further that $\eta^2 :=\Var (Y_1-aX_1)$ is positive and finite. Set $Q_n=\sum_{j=1}^{S_n} Y_j$. Then we have 
$$ \frac{Q_n-an}{\sqrt{n}} \Rightarrow \mathcal{N}(0, \mu_0\eta^2). $$
\end{theorem}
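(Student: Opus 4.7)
The plan is to reduce the statement to Anscombe's theorem by subtracting off the linear drift. Set $W_j \defeq Y_j - a X_j$; since $a = \mu_0 \E Y_1 = \E Y_1/\E X_1$ (using $\mu_0 = 1/\E X_1$ from Lemmas \ref{l:mc} and \ref{l:stationary}), one checks immediately that $\E W_j = 0$, and by hypothesis $\Var(W_j) = \eta^2 \in (0,\infty)$. As the pairs $(X_j,Y_j)$ are i.i.d., so are the $W_j$. The key decomposition is
$$Q_n - an \;=\; \sum_{j=1}^{S_n} W_j \;+\; a\bigl(T_{S_n} - n\bigr),$$
where the first summand is a random sum of centered i.i.d.\ variables to which I intend to apply Anscombe, and the second is a boundary remainder which I intend to show is $o_p(\sqrt n)$.

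For the main term, the classical CLT yields $m^{-1/2}\sum_{j=1}^m W_j \Rightarrow \mathcal{N}(0,\eta^2)$, while Lemma \ref{l:Snconc} supplies $S_n/n \to \mu_0$ almost surely, hence in probability. These are precisely the hypotheses of Anscombe's theorem (Theorem \ref{t:anscombe}) with $N(t) = S_t$ and $c = \mu_0$, yielding
$$\frac{1}{\sqrt{n}} \sum_{j=1}^{S_n} W_j \;\Rightarrow\; \mathcal{N}(0,\, \mu_0 \eta^2).$$

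For the remainder, the minimality in the definition of $S_n$ gives $T_{S_n-1} < n \leq T_{S_n}$, so $0 \leq T_{S_n} - n < X_{S_n}$, and it suffices to show $X_{S_n}/\sqrt n \to 0$ in probability. Fix $\varepsilon > 0$ and $M > 1/\mu_0$. By Lemma \ref{l:Snconc} we have $\P(S_n > Mn) \to 0$, and on the complementary event a union bound gives $\P(X_{S_n} > \varepsilon \sqrt n,\, S_n \leq Mn) \leq Mn\, \P(X_1 > \varepsilon\sqrt n)$. The factor $n\,\P(X_1 > \varepsilon\sqrt n) \leq \varepsilon^{-2}\, \E[X_1^2 \mathbf{1}_{\{X_1 > \varepsilon\sqrt n\}}] \to 0$ by dominated convergence, using the crucial input $\E X_1^2 < \infty$ from Lemma \ref{l:moments}. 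Slutsky's theorem then combines the two displays to give the desired limit for $(Q_n - an)/\sqrt n$.

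The main obstacle, modest though it is, lies entirely in controlling the boundary term $T_{S_n} - n$; once the finite second moment of the inter-renewal time is available the step becomes routine, but without it the naive argument breaks. This is exactly why Section \ref{s:finiterenewals} went through the effort of establishing $\E_0 (R_0^+)^2 < \infty$ via Kac's formula and the stationary distribution of the chain $M_n$. Given that input, the Anscombe step itself is a direct black-box application.
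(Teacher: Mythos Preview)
Your proof is correct and follows exactly the route the paper signals: the paper omits the argument entirely, merely pointing to Anscombe's theorem together with Lemma~\ref{l:Snconc} (and citing \cite{Ser09}), and your decomposition $Q_n-an=\sum_{j\le S_n}(Y_j-aX_j)+a(T_{S_n}-n)$ with Anscombe applied to the first term and a second-moment bound on the overshoot is the standard way to carry this out. Your remainder control is essentially a reprise of Lemma~\ref{l:basic}, which the paper also uses later for the analogous step in the proof of Theorem~\ref{t:clt}.
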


We need the following to complete the proof of Theorem \ref{t:clt}.

\begin{lemma}
\label{l:positive} In the context of Theorem \ref{t:regen}, $0<\eta^2< \infty$.
\end{lemma}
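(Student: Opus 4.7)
The plan is to treat the two inequalities separately, with finiteness essentially immediate and positivity requiring a short explicit calculation.

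For finiteness of $\eta^2$, I would use the trivial bound $1 \le Y_1 \le X_1$, valid because the longest increasing subsequence of a permutation of $[X_1]$ has length at most $X_1$. This gives $|Y_1 - aX_1| \le (1+a) X_1$, hence
\[
\eta^2 = \Var(Y_1 - aX_1) \le \E(Y_1 - aX_1)^2 \le (1+a)^2 \E X_1^2 .
\]
By Lemma \ref{l:mc} we have $X_1 \stackrel{d}{=} R_0^+$, and Lemma \ref{l:moments} yields $\E_0 (R_0^+)^2 < \infty$, so $\eta^2 < \infty$.

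For strict positivity, the strategy is to exhibit two disjoint events of positive probability on which $Y_1 - aX_1$ takes two different deterministic values; this forces $\Var(Y_1 - aX_1) > 0$. The simplest choice is to compare $X_1 = 1$ with $X_1 = 2$, using the Mallows construction of Section \ref{s:mallows-process}. The event $\{X_1 = 1\}$ coincides with $\{Z_1 = 1\}$, which has probability $1 - q > 0$; on this event $\Sigma_1$ is the one-element permutation, so $Y_1 = 1$. The event $\{X_1 = 2\}$ requires $Z_1 > 1$ together with $\widetilde{\Pi}([2]) = [2]$, and a direct check of the insertion rule shows this forces $Z_1 = 2$ and $Z_2 = 1$, an event of probability $q(1-q)^2 > 0$; on this event $\Sigma_1$ is the permutation $21$, so again $Y_1 = 1$. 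Consequently $Y_1 - aX_1 = 1 - a$ on an event of probability $\ge 1-q$ and $Y_1 - aX_1 = 1 - 2a$ on a disjoint event of probability $\ge q(1-q)^2$. Since $Y_1 \ge 1$ almost surely and $\mu_0 > 0$, the constant $a = \mu_0 \E Y_1$ is strictly positive, so $1 - a \ne 1 - 2a$, and $\eta^2 > 0$ follows.

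I do not anticipate a real technical obstacle; the only item needing a bit of care is the combinatorial verification that $\{X_1 = 2\}$ coincides exactly with $\{Z_1 = 2, Z_2 = 1\}$ and that the induced permutation $\Sigma_1$ is then $21$, which is a direct unwinding of the definitions of the Mallows process and the regeneration times. Everything else reduces to bookkeeping with the second-moment bound from Lemma \ref{l:moments} and the positivity of $a$.
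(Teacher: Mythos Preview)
Your proof is correct. The finiteness argument is identical to the paper's. For positivity, the paper takes a slightly different route: it uses the decomposition $\Var(Y_1-aX_1)=\E\bigl((Y_1-aX_1)^2\bigr)\ge \E\bigl(\Var(Y_1\mid X_1)\bigr)$ and then exhibits, on the event $\{X_1=3\}$, two permutations $(3\,2\,1)$ and $(3\,1\,2)$ of the \emph{same} length with different LIS lengths, so that $\Var(Y_1\mid X_1=3)>0$. You instead fix $Y_1=1$ and vary $X_1$ over the events $\{X_1=1\}$ and $\{X_1=2\}$, showing directly that $Y_1-aX_1$ is not almost surely constant. Both arguments are elementary; yours is arguably a shade simpler because it avoids having to find two distinct permutations of a common block length with different LIS values, at the small cost of needing to invoke $a>0$ to separate $1-a$ from $1-2a$.
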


\begin{proof}
Observe that, since $1\leq Y_1 \leq X_1$, we have  $|Y_1-aX_1|\leq (1+a)X_1$ and hence $\eta^2 <\infty$ using Lemmas \ref{l:mc} and \ref{l:moments}. To see $\eta^2>0$, note that
$$\Var (Y_1-aX_1) = \E ((Y_1-aX_1)^2) \geq \E(\Var (Y_1\mid X_1)) $$
and hence it suffices to prove that for some $j\in \N$ with $\P(X_1=j)>0$, we have $\Var (Y_1\mid X_1=j)>0$. To see this consider $j=3$; it is straightforward to see that $\P(X_1=3)>0$, and conditioned on $\{X_1=3\}$, notice that $\Sigma_1$ can be both the permutations $(3~ 2~ 1)$ and $(3~ 1~ 2)$ with positive probability. It then follows that $\Var (Y_1\mid X_1=3)>0$ and the proof is complete.
\end{proof}

Now we are in a position to complete the proof of Theorem \ref{t:clt}. 
We make use of the following basic result.

\begin{lemma}
\label{l:basic}
Let $W_1, W_2, \ldots,$ be an i.i.d.\ sequence of non-negative random variables with $\E W_i^2< \infty $. Then we have for all constants $C>0$
$$\frac{\max_{1\leq i \leq Cn W_i}}{\sqrt{n}} \rightarrow 0$$
in probability.
\end{lemma}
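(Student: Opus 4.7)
The plan is to prove the lemma via a union bound combined with a truncated second-moment estimate. Fix any $\varepsilon > 0$. By the union bound and the fact that the $W_i$ are identically distributed,
$$\P\!\left(\max_{1 \le i \le Cn} W_i > \varepsilon \sqrt{n}\right) \le Cn \, \P(W_1 > \varepsilon \sqrt{n}).$$
The task is then to show that $n \P(W_1 > \varepsilon \sqrt{n}) \to 0$ as $n \to \infty$ for each fixed $\varepsilon > 0$.

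To do this, I would use the finite second moment assumption in a sharpened Markov-style estimate. Writing $\{W_1 > \varepsilon \sqrt{n}\} = \{W_1^2 > \varepsilon^2 n\}$, one has
$$n \P(W_1 > \varepsilon \sqrt{n}) = n \P(W_1^2 > \varepsilon^2 n) \le \frac{1}{\varepsilon^2} \, \E\bigl[W_1^2 \, \mathbf{1}_{\{W_1 > \varepsilon \sqrt{n}\}}\bigr].$$
Since $\E W_1^2 < \infty$ and the events $\{W_1 > \varepsilon \sqrt{n}\}$ shrink to a null set as $n \to \infty$, the dominated convergence theorem (with dominating function $W_1^2$) gives $\E[W_1^2 \mathbf{1}_{\{W_1 > \varepsilon \sqrt{n}\}}] \to 0$. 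Combining with the first display yields
$$\P\!\left(\max_{1 \le i \le Cn} W_i > \varepsilon \sqrt{n}\right) \le \frac{C}{\varepsilon^2} \E\bigl[W_1^2 \mathbf{1}_{\{W_1 > \varepsilon \sqrt{n}\}}\bigr] \xrightarrow[n \to \infty]{} 0,$$
which is the desired convergence in probability.

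There is no real obstacle here; the only subtle point is that a plain Markov inequality using $\E W_1^2$ would only give $\P(W_1 > \varepsilon \sqrt{n}) \le \E W_1^2 / (\varepsilon^2 n)$, which combined with the union bound yields a bound of $C \E W_1^2 / \varepsilon^2$ that does not tend to zero. The truncation step above exploits the full strength of $\E W_1^2 < \infty$ (equivalently, uniform integrability of $W_1^2$), which is precisely what allows one to improve the Markov bound to something that is $o(1/n)$ in the relevant sense.
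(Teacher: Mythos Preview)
Your proof is correct and follows essentially the same approach as the paper: both arguments reduce the problem to showing that $n\,\P(W_1 > \varepsilon\sqrt{n}) \to 0$, which is the standard consequence of $\E W_1^2 < \infty$. The only cosmetic difference is that the paper writes the exact independence formula $1-(1-\P(W_1^2/\varepsilon^2 \ge n))^{Cn}$ in place of your union bound, and simply cites the fact $n\,\P(W_1^2/\varepsilon^2 \ge n)\to 0$ rather than deriving it via the truncated Markov/DCT argument you spell out.
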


\begin{proof}
Fix $C>0$. For every $\epsilon>0$ we have
\begin{eqnarray*}
\P\left(\max_{1\leq i \leq Cn} W_i\geq \epsilon \sqrt{n}\right) &=& 1-\left(1-\P\left( \frac{W_1^2}{\epsilon^2}\geq n \right)\right)^{Cn}\\
&\rightarrow & 0
\end{eqnarray*}
as $n \rightarrow \infty$. This follows from the fact that $n\P( W_1^2/\epsilon^2 \geq n ) \rightarrow 0$ as $n\rightarrow \infty$ since $\E [W_1^2/\epsilon^2] <\infty$.
This completes the proof.  
\end{proof}

\begin{proof}[Proof of Theorem \ref{t:clt}]
It follows from Lemma \ref{l:approx} that
$$\frac{Q_n-an}{\sqrt{n}}- \frac{\max_{i\leq S_n} Y_i}{\sqrt{n}}\leq  \frac{L_n-an}{\sqrt{n}} \leq \frac{Q_n-an}{\sqrt{n}}.$$
Note that $\E Y_i^2 <\infty$ since $Y_i \le X_i$ and $\E(X_i^2) < \infty$ by Lemma \ref{l:moments}. Using this and Lemma \ref{l:basic} it follows that
$$\frac{\max_{i\leq 2\mu_0 n} Y_i}{\sqrt{n}} \stackrel{p}{\rightarrow} 0.$$
Using Lemma \ref{l:Snconc} now gives
$$\frac{\max_{i\leq S_n} Y_i}{\sqrt{n}} \stackrel{p}{\rightarrow} 0.$$
Hence setting $\sigma=\mu_0^{1/2}\eta$ and using Theorem \ref{t:regen} we have  
$$ \frac{L_n-an}{\sigma\sqrt{n}} \Rightarrow \mathcal{N}(0,1).$$
This completes the proof.
\end{proof}

\section{Law of large numbers for the length of the LDS}
\label{s:lln}
In this section we establish Theorem \ref{t:lln}, a weak law for the length of the longest decreasing subsequence $L^{\downarrow}_n$ of a ${\rm Mallows}(q)$ permutation, or equivalently, $L_n$ for a ${\rm Mallows} (1/q)$ permutation for $0<q<1$. Our proof makes use of the Markov chain defined in Section \ref{s:mcregen}. Along the way, we show large deviations estimates for the longest decreasing subsequence that improve upon some of the results in \cite[Theorem 1.7]{BhaPel15} and simplify the proofs. \\


Recall the regenerative process from Section \ref{s:mcregen}. Let $\Sigma$ denote a random permutation having the same distribution as $\Sigma_i$. Let  $Y^{\downarrow}$ denote the length of LDS of $\Sigma$. Theorem \ref{t:lln} follows from the following proposition and Lemma \ref{l:ldsapprox}.

\begin{proposition}
\label{p:tail}
$\P(Y^{\downarrow}\geq k)= q^{k^2/2(1+o(1))}$ as $k\rightarrow \infty$.
\end{proposition}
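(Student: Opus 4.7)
I would prove the matching asymptotic lower and upper bounds separately.

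\textbf{Lower bound.} I would exhibit the specific event $\{Z_j = k - j + 1 : j = 1, \ldots, k\}$. By induction on $j$ using the Mallows insertion rule, $\widetilde\Pi(j) = k - j + 1$ for $j \leq k$ on this event, so $\widetilde\Pi([k]) = [k]$ forcing $T_1 = X_1 = k$; hence $\Sigma$ equals the reverse identity on $[k]$ with $Y^\downarrow = k$. Since $\{Z_j\}$ are i.i.d.\ $\mathrm{Geom}(1-q)$, the event has probability $\prod_{j=1}^k (1-q)q^{k-j} = (1-q)^k q^{k(k-1)/2} = q^{k^2/2 \cdot (1+o(1))}$.

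\textbf{Upper bound via first moments.} I would apply the first-moment method:
\[
\P(Y^\downarrow \geq k) \leq \sum_{(i,v) \in \mathcal{D}_k} \P(\widetilde\Pi(i_j) = v_j\,\forall j,\ i_k \leq X_1),
\]
where $\mathcal{D}_k = \{(i_1 < \cdots < i_k,\, v_1 > \cdots > v_k \geq 1)\}$. Each summand I would bound by combining two facts: (i) since at most $i_j - j$ non-subsequence positions precede $i_j$, the rank of $v_j$ among available values at step $i_j$ satisfies $Z_{i_j} \geq v_j - i_j + j$, and independence of the $Z_{i_j}$'s at distinct indices gives $\P(\widetilde\Pi(i_j) = v_j\,\forall j) \leq \prod_j q^{(v_j - i_j + j - 1)_+}$; and (ii) Proposition~\ref{l:return} supplies the exponential tail $\P(X_1 \geq N) \leq A e^{-cN}$. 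Writing $i_j = j + b_j$, $v_j = k - j + 1 + c_j$ with $b$ nondecreasing and $c$ nonincreasing and both nonnegative, the undisplaced family ($b = 0$) sums to $q^{k(k-1)/2} \cdot \prod_{j=1}^{k}(1-q^j)^{-1}$ via the partition generating function, matching the target.

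\textbf{Main obstacle and closure.} The delicate step is handling displaced configurations---for example $(i,v) = (\{M+1, \ldots, M+k\},\, \{k, \ldots, 1\})$ for large $M$---where the $Z$-rank bound above becomes trivial. For these I would compute the probability directly: $\widetilde\Pi(M+j) = k - j + 1$ for all $j$ is equivalent to the values $\{1, \ldots, k\}$ remaining unpicked through step $M$; since on this event the $k$ smallest unpicked values are always $\{1, \ldots, k\}$, this forces $Z_l \geq k+1$ for each $l \leq M$, contributing an extra factor $q^{Mk}$, after which the $k$ subseq steps contribute $(1-q)^k q^{k(k-1)/2}$. Summing $q^{Mk + k(k-1)/2}$ over $M \geq 0$ gives $O(q^{k(k-1)/2})$. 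Extending this enhanced bound shape-by-shape across the $(b, c)$ parametrization---trading off the $Z$-rank estimate against the extra delay cost for "small" values to remain unplaced---yields the total bound $q^{k^2/2 \cdot (1+o(1))}$, with polynomial-in-$k$ factors from the shape count absorbed into the $(1 + o(1))$ in the exponent.
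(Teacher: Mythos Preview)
Your lower bound is exactly the paper's argument (Lemma~\ref{l:lb}): both exhibit the reverse identity block of length $k$ and compute its probability as $(1-q)^k q^{k(k-1)/2}$.

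Your upper bound, however, has a genuine gap, and it diverges from the paper's route in a way that creates the difficulty. The paper does \emph{not} parametrize by the permutation values $v_j=\widetilde\Pi(\ell_j)$; it parametrizes by the geometric values $h_j=Z_{\ell_j}$. The key observation (Lemma~\ref{l:geom}) is that along any decreasing subsequence the $Z$-values are \emph{strictly decreasing}: $Z_{\ell_1}>Z_{\ell_2}>\cdots>Z_{\ell_k}$. Hence the event $\{Z_{\ell_i}=h_i\ \forall i\}$ has probability $(1-q)^k q^{\sum_i(h_i-1)}\le (1-q)^k q^{k(k-1)/2}$ \emph{uniformly over all index sets} $\mathbf{l}$, and this factor is independent of the rest of the configuration. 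The remaining work is to control the sum over $\mathbf{l}$, which the paper does by showing (again Lemma~\ref{l:geom}) that the Markov chain $M_t$ must stay positive on $[1,\ell_1-1]$ and above $h_k-1$ on $[\ell_1,\ell_k)$; a censoring lemma (Lemma~\ref{l:censor}) decouples these events from $\cf_{\mathbf{h}}$, and Proposition~\ref{l:return} then supplies exponential decay in $\ell_1$ and in $\ell_k-\ell_1-10h_1$, after which a four-case split on the relative sizes of $\ell_1,\ell_k,h_1$ finishes.

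Your rank bound $Z_{i_j}\ge v_j-i_j+j$ is correct but, as you note, degenerates to nothing whenever $i_j-j\ge v_j-1$, and this happens for a huge family of shapes, not just the single translate you analyze directly. Your closing sentence (``extending this enhanced bound shape-by-shape\ldots trading off the $Z$-rank estimate against the extra delay cost'') is precisely where the proof is missing: you have computed one displaced shape exactly and the fully undisplaced family exactly, but you have not given any mechanism that handles a general $(b,c)$ pair, nor have you shown how to combine your rank bound with the $X_1$ tail (the two events are not independent, so you cannot simply multiply). Making this work would amount to rediscovering the ``chain stays high'' condition and the decoupling step, i.e.\ essentially the paper's Lemmas~\ref{l:geom}--\ref{l:censor}. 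The cleaner fix is to switch parametrization from $(i,v)$ to $(\ell,h)$ at the outset, which gives the $q^{k^2/2}$ factor for free and reduces the problem to index-counting controlled by return-time tails.
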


We postpone the proof of Proposition \ref{p:tail} and assuming it, prove the theorem. We use Lemma \ref{l:Snconc}, the fact that $S_n$ is concentrated.

\begin{proof}[Proof of Theorem \ref{t:lln}] 
Fix $\varepsilon>0$. Since the $Y^{\downarrow}_i$ are independent and identically distributed, using Proposition \ref{p:tail} it can be verified that as $n \to \infty$,
\begin{align}
\P\left( \displaystyle\max_{i \le (1-\varepsilon) \mu_0 n } Y^{\downarrow}_i  < (1-2 \varepsilon) \sqrt{\frac{2 \log n}{\log q^{-1}}}\right) \le  \left( 1-\frac{1}{n^{(1+o(1))(1-2\varepsilon)^2)}}\right)^{(1-\varepsilon) \mu_0 n} \to 0
\label{e:lower}
\end{align}
and
\begin{align}
\P\left( \displaystyle\max_{i \le (1+\varepsilon) \mu_0 n } Y^{\downarrow}_i  > (1+2 \varepsilon)\sqrt{\frac{2 \log n}{\log q^{-1}}} \right) \le  (1+\varepsilon) \mu_0 n \frac{1}{n^{(1+o(1))(1+2\varepsilon)^2}} \to 0.
\label{e:upper}
\end{align}

By Lemma \ref{l:Snconc}, with probability going to $1$ as $n \to \infty$, for every $\varepsilon >0$, $(1-\varepsilon)\mu_0 n \le S_n \le (1+\varepsilon) \mu_0 n$. The result thus follows from equations \eqref{e:lds-upper-lower}, \eqref{e:lower} and \eqref{e:upper}.

\end{proof}

We break the proof of Proposition \ref{p:tail} into two parts, proved in the following lemmas.
\begin{lemma}
\label{l:lb}
We have $\P(Y^{\downarrow}=k)\geq  q^{k^2/2(1+o(1))}$ as $k\rightarrow \infty$.
\end{lemma}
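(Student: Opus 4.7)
The plan is to exhibit an explicit event of the right probabilistic order on which $Y^{\downarrow} = k$ holds deterministically. Recall from Section \ref{s:mcregen} that $\Sigma$ has the law of the first excursion $\Sigma_1$ of the Mallows process, whose length equals the first return time $R_0^+$ of the Markov chain $M_n$ to $0$, with both $\Sigma_1$ and $R_0^+$ driven by the same i.i.d.\ $\mathrm{Geom}(1-q)$ sequence $\{Z_i\}_{i \ge 1}$. The idea is to force this first excursion to be the full reverse identity on $[k]$, which automatically has a decreasing subsequence of length $k$.

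I would take the event
\[
\mathcal{E}_k := \{ Z_1 = k,\ Z_2 = k-1,\ \ldots,\ Z_k = 1 \}.
\]
A short induction shows that on $\mathcal{E}_k$ the Mallows insertion rule produces $\widetilde{\Pi}(i) = k - i + 1$ for $i = 1, \ldots, k$: at step $i$ the previously placed values are exactly $\{k-i+2, \ldots, k\}$, so the remaining positions in increasing order begin $1, 2, \ldots, k-i+1, k+1, \ldots$, and choosing $Z_i = k-i+1$ selects $k-i+1$. Tracking the Markov chain in parallel using Lemma \ref{l:mc}, $M_i = \max(M_{i-1}, Z_i) - 1 = k-i$ for $1 \le i \le k-1$ and $M_k = 0$, so $R_0^+ = k$ and $\Sigma$ has length exactly $k$ and equals $(k, k-1, \ldots, 1)$. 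Since $\Sigma$ is itself a decreasing sequence of length $k$ that cannot be extended beyond its own support, $Y^{\downarrow} = k$ on $\mathcal{E}_k$.

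The probability of $\mathcal{E}_k$ is a direct product computation using $\P(Z_i = m) = (1-q)q^{m-1}$:
\[
\P(\mathcal{E}_k) = \prod_{i=1}^k (1-q)q^{k-i} = (1-q)^k q^{k(k-1)/2},
\]
whose logarithm equals $\tfrac{k^2}{2}(1+o(1))\log q$ as $k \to \infty$. Since $\mathcal{E}_k \subseteq \{ Y^{\downarrow} = k \}$, this yields $\P(Y^{\downarrow} = k) \ge q^{k^2/2(1+o(1))}$, proving the lemma.

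There is no real obstacle in this argument: the key step is identifying the right deterministic configuration of the driving geometrics. The only point requiring a little care is the joint verification that the chain hits $0$ at exactly time $k$ (so the excursion is precisely this permutation, not a prefix of a longer one) and that the Mallows insertion rule really produces the reverse identity under this configuration; both follow from a single synchronous induction.
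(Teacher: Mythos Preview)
Your proposal is correct and follows essentially the same approach as the paper: both lower bound $\P(Y^{\downarrow}=k)$ by the probability that $\Sigma$ is exactly the reverse identity $(k,k-1,\ldots,1)$, which amounts to the event $\{Z_i=k-i+1,\ 1\le i\le k\}$ and has probability $(1-q)^k q^{k(k-1)/2}$. You supply a bit more detail than the paper in verifying that this configuration forces $R_0^+=k$ and that the insertion rule yields the reverse identity, but the argument is the same.
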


\begin{proof}
The probability $\P(Y^{\downarrow} = k)$ can be lower bounded by the probability that $\Sigma$ is the permutation $(k,k-1,\ldots, 2,1)$. Further,
\[
\P(\Sigma {\rm \ is \ the \ permutation \ } (k,k-1,\ldots, 2,1)) = (1-q)^{k} q^{\sum_{i=1}^{k-1} i} = (1-q)^{k} q^{\frac{k(k-1)}{2}} \ge  q^{k^2/2(1+o(1))}. \qedhere
\]
\end{proof}

\begin{lemma}
\label{l:lb1}
We have $\P(Y^{\downarrow}\geq k)\leq  q^{k^2/2(1+o(1))}$ as $k\rightarrow \infty$.
\end{lemma}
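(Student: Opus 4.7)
The plan is to combine the exponential tail on $X_1$ from Proposition \ref{l:return} with a first moment bound on the number of decreasing subsequences of length $k$ in the genuine $\mbox{Mallows}(q)$ permutation $\Pi_N$ on $[N]$, where $N = N(k)$ is any polynomial in $k$ with $N = \omega(k^2)$ (say $N = \lceil k^3\rceil$). Specializing Proposition \ref{l:return} at $t=0$ gives $\P(X_1 > N) \leq A e^{-cN}$, which is much smaller than $q^{k^2/2}$ for our choice of $N$. On the complementary event $\{X_1 \leq N\}$, the definition of $T_1$ ensures $\widetilde\Pi([X_1]) = [X_1]$, while all values $\widetilde\Pi(i)$ with $i > X_1$ exceed $X_1$; consequently the rank of $\widetilde\Pi(i)$ in $\{\widetilde\Pi(k):k\in[N]\}$ equals $\widetilde\Pi(i)$ for $i \in [X_1]$, so $\Pi_N(i) = \Sigma_1(i)$ for such $i$. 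Thus every decreasing subsequence of $\Sigma_1$ is also one in $\Pi_N$, and
\[
\P(Y^{\downarrow} \geq k) \leq \P(L^{\downarrow}_N \geq k) + A e^{-cN}.
\]

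The heart of the argument is the per-pattern bound $\P(\Pi_N(i_j) = v_j\ \forall j) \leq q^{\binom{k}{2}}$ for any positions $i_1 < \cdots < i_k$ and any values $v_1 > \cdots > v_k$ in $[N]$. To prove it I would construct an injection $\phi$ on $\mathcal S_N$: given $\pi$ with the above decreasing pattern, let $\phi(\pi)$ coincide with $\pi$ outside $J = \{i_1,\ldots,i_k\}$ and set $\phi(\pi)(i_j) = v_{k+1-j}$, reversing the values at the $k$ selected positions into increasing order. Pairs of positions both in $J$ contribute exactly $\binom{k}{2}$ inversions to $\pi$ but $0$ to $\phi(\pi)$, and pairs both outside $J$ contribute identically. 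For any $j \notin J$, setting $a_j = \#\{b : i_b < j\}$ and $L = \#\{b : v_b > \pi(j)\}$, a short case analysis shows that the number of cross inversions involving $j$ equals $k - |a_j - L|$ under $\pi$ and $|a_j + L - k|$ under $\phi(\pi)$, and that the difference is always non-negative (checked across the four cases determined by the signs of $a_j - L$ and $a_j + L - k$). Hence $\mathrm{inv}(\pi) \geq \mathrm{inv}(\phi(\pi)) + \binom{k}{2}$, and since $\phi$ is an injection into the disjoint pattern class $\{\pi' : \pi'(i_j) = v_{k+1-j}\ \forall j\}$, summing gives
\[
\sum_{\pi : \pi(i_j) = v_j \forall j} q^{\mathrm{inv}(\pi)} \leq q^{\binom{k}{2}} \sum_{\pi' \in \mathcal S_N} q^{\mathrm{inv}(\pi')} = q^{\binom{k}{2}} Z_{N,q},
\]
which yields the claimed per-pattern bound after dividing by $Z_{N,q}$.

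Finally, a union bound over the $\binom{N}{k}^2$ choices of positions and decreasing value tuples yields
\[
\P(L^{\downarrow}_N \geq k) \leq \binom{N}{k}^2 q^{\binom{k}{2}} \leq \left(\frac{eN}{k}\right)^{2k} q^{k(k-1)/2}.
\]
Taking logarithms, this equals $\frac{k^2}{2}\log q + O(k \log N) = \frac{k^2}{2}\log q\,(1+o(1))$ for $N$ polynomial in $k$, so combined with the tail from the first step we obtain $\P(Y^{\downarrow} \geq k) \leq q^{k^2/2(1+o(1))}$. I expect the main obstacle to be the case analysis verifying non-negativity of the cross contributions in the bijection argument; the truncation to $\Pi_N$ and the ensuing union bound are otherwise routine.
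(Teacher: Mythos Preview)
Your argument is correct, and it takes a genuinely different route from the paper's own proof.

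The paper proves Lemma~\ref{l:lb1} entirely inside the Markov-chain framework: it records the constraints a decreasing subsequence imposes on the geometrics $Z_{\ell_i}$ and on the chain $M_t$ (Lemma~\ref{l:geom}), uses a censoring comparison (Lemmas~\ref{l:censor}--\ref{l:censorbasic}) to decouple the constraints, and then bounds the resulting union over $(\mathbf{l},\mathbf{h})$ by a four-case analysis ($\mathcal{C}_1$--$\mathcal{C}_4$) driven by Proposition~\ref{l:return}. Your proof instead uses Proposition~\ref{l:return} only once, to truncate $X_1\le N=k^3$, then transfers the question to $L^{\downarrow}_N$ for the genuine $\mbox{Mallows}(q)$ permutation $\Pi_N$ on $[N]$ (via Lemma~\ref{l:infinitetofinite} and the regeneration structure), and finishes with a purely combinatorial first-moment bound. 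The injection $\phi$ reversing the values on the $k$ selected positions, together with the inequality $|a_j-L|+|a_j+L-k|\le k$ (which indeed holds in all four sign cases since $0\le a_j,L\le k$), gives $\mathrm{inv}(\pi)\ge \mathrm{inv}(\phi(\pi))+\binom{k}{2}$ and hence the per-pattern bound $\P(\Pi_N(i_j)=v_j\ \forall j)\le q^{\binom{k}{2}}$; the union bound over $\binom{N}{k}^2$ patterns then contributes only $e^{O(k\log k)}$, which is absorbed into the $(1+o(1))$.

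What each approach buys: the paper's proof stays within the process-level description and extracts more structural information about \emph{where} the decreasing subsequence sits relative to the chain's excursion (which could be useful for finer estimates). Your proof is shorter and more elementary, needing nothing beyond Proposition~\ref{l:return}, Lemma~\ref{l:infinitetofinite}, and an inversion-counting injection on $\mathcal{S}_N$; in particular it bypasses Lemmas~\ref{l:geom}--\ref{l:censorbasic} entirely. One minor remark: your per-pattern bound can be sharpened to a single factor $\binom{N}{k}$ in the union bound by using that the pattern of $\Pi_N$ on any $k$ fixed positions is itself $\mbox{Mallows}(q)$ on $[k]$, so $\P(\Pi_N(i_1)>\cdots>\Pi_N(i_k))=q^{\binom{k}{2}}/Z_{k,q}\le q^{\binom{k}{2}}$; but your version already suffices for the $q^{k^2/2(1+o(1))}$ conclusion.
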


To prove Lemma \ref{l:lb1} we need the following lemmas. For the rest of this section, we shall consider the coupling between the Markov chain $M_n$ and the Mallows' process used in construction of $\Sigma$ as described in Lemma \ref{l:mc}.

\begin{lemma}
\label{l:geom}
Suppose $\ell_1<\ell_2<\cdots <\ell_{k}$ are such that $(\ell_k, \ell_{k-1},\ldots , \ell_1)$ is a decreasing subsequence in $\Sigma$. Then $Z_{\ell_1}>Z_{\ell_2}>\cdots >Z_{\ell_k}$. Further, for $i\geq 2$, $M_{\ell_i}=M_{\ell_{i}-1}-1$, and finally, $\displaystyle\min_{\ell_1 \le t < \ell_k} M_t > Z_{\ell_k} -1$.
\end{lemma}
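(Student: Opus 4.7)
The plan is to exploit the explicit coupling of Lemma~\ref{l:mc}, under which $M_t = \max_{j \le t} \widetilde{\Pi}(j) - t$, together with the interpretation of $Z_i$ as the rank of $\widetilde{\Pi}(i)$ in the set of values unassigned before step $i$ of the Mallows' process. A useful preliminary reformulation I would record at the outset is that $M_t$ counts precisely the unassigned values at the end of step $t$ that lie in $[1, \max_{j \le t} \widetilde{\Pi}(j)]$: indeed, this interval contains $\max_{j \le t} \widetilde{\Pi}(j)$ integers, of which exactly $t$ are assigned.

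For the strict chain $Z_{\ell_1} > Z_{\ell_2} > \cdots > Z_{\ell_k}$ I would argue by a one-step monotonicity of ranks. Fix $i<k$. Right before step $\ell_i$, both $\widetilde{\Pi}(\ell_i)$ and $\widetilde{\Pi}(\ell_{i+1})$ are unassigned, and because $\widetilde{\Pi}(\ell_{i+1}) < \widetilde{\Pi}(\ell_i)$, the rank of $\widetilde{\Pi}(\ell_{i+1})$ in this unassigned set is strictly less than $Z_{\ell_i}$. As the process runs from step $\ell_i$ through step $\ell_{i+1}-1$, the unassigned set only shrinks, so the rank of $\widetilde{\Pi}(\ell_{i+1})$ inside it can only weakly decrease. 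Its final value is $Z_{\ell_{i+1}}$, so $Z_{\ell_{i+1}} < Z_{\ell_i}$.

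The identity $M_{\ell_i} = M_{\ell_i-1} - 1$ for $i \ge 2$ is a direct computation: writing the one-step increment as $M_{\ell_i}-M_{\ell_i-1} = \bigl(\max_{j \le \ell_i} \widetilde{\Pi}(j) - \max_{j \le \ell_i-1} \widetilde{\Pi}(j)\bigr) - 1$, the running maximum does not change at step $\ell_i$, because the earlier term $\widetilde{\Pi}(\ell_{i-1})$ of the decreasing subsequence, with $\ell_{i-1} \le \ell_i-1$, already exceeds $\widetilde{\Pi}(\ell_i)$.

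For the lower bound $\min_{\ell_1 \le t < \ell_k} M_t > Z_{\ell_k}-1$ I would invoke the preliminary reformulation. For any $t$ in the stated range, $\max_{j \le t} \widetilde{\Pi}(j) \ge \widetilde{\Pi}(\ell_1) > \widetilde{\Pi}(\ell_k)$, so $M_t$ is at least the number of unassigned values at the end of step $t$ that are at most $\widetilde{\Pi}(\ell_k)$, which is exactly the rank of $\widetilde{\Pi}(\ell_k)$ in the unassigned set at time $t$. Since $t < \ell_k$, that unassigned set is a superset of the one right before step $\ell_k$, so this rank is at least $Z_{\ell_k}$, giving $M_t \ge Z_{\ell_k}$ as integers. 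I expect this last step to be the main piece of bookkeeping requiring care, since it hinges on tracking how ranks evolve as unassigned elements are removed; the first two claims are essentially immediate once the definitions are unwound.
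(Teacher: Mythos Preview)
Your proof is correct and follows essentially the same approach as the paper's: both arguments exploit the coupling $M_t=\max_{j\le t}\widetilde\Pi(j)-t$, interpret $Z_i$ as the rank of $\widetilde\Pi(i)$ among currently unassigned positions, and use that ranks of a fixed unassigned element can only decrease as the unassigned set shrinks. Your write-up is in fact slightly more explicit than the paper's, particularly in the third claim, but the ideas are the same.
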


\begin{proof}
For $(\ell_k, \ell_{k-1},\ldots , \ell_1)$ to be a decreasing subsequence we must have that $\ell_{i}$ is placed to the left of $\ell_{i-1}$ for all $1 < i \le k$. By construction $M_{\ell_{i-1}}\leq Z_{\ell_{i-1}}-1$ for all $i$. So at step $\ell_{i-1}$ there are at most $Z_{\ell_{i-1}}-1$ many empty spots to the left of the spot where $\ell_{i-1}$ is placed. So for $\ell_{i}$ to be placed in one of these spots we must have $Z_{\ell_i}<Z_{\ell_{i-1}}$. This proves the first assertion of the lemma.

For the second assertion suppose that for some $i\geq 2$, we have $M_{\ell_i}\geq M_{\ell_i-1}$. Then one must have $Z_{\ell_i}> M_{\ell_i-1}$. This implies that $\ell_i$ is placed to the right of all elements placed so far, in particular to the right of $\ell_{i-1}$, which contradicts the assumption that $(\ell_k, \ell_{k-1},\ldots , \ell_1)$ is a decreasing subsequence. 

For the last assertion, observe that when any $\ell_1 \le t < \ell_k$ is assigned to its position, there must be at least $Z_{\ell_k}$ empty positions to the left of $\ell_1$. This is because it must be the case that $\ell_k$ is assigned to the left of $\ell_1$ since $(\ell_k,\ldots,\ell_1)$ is a decreasing subsequence. Thus, it cannot be the case that $M_t \le Z_{\ell_k } -1$ since $M_t$ counts the total number of unassigned positions to the left of the rightmost assigned position.
%
%
%
\end{proof}

\begin{lemma}
\label{l:censor}
Let $\{Z_i\}_{i\geq 1}$ be a sequence of i.i.d.\ $\mbox{Geom}(1-q)$ random variables. Consider the Markov chain $\{M_t\}_{t\geq 0}$ defined by $M_{t+1}=\max\{M_t,Z_{t+1}\}-1$ started from $M_0=m$. Fix  $0< \ell_1<\ell_2<\cdots < \ell_{k}$ such that $M_{\ell_i}=M_{\ell_{i}-1}-1$ for all $i$. Let $S=\{\ell_1,\ell_2,\ldots, \ell_k\}$. Consider the chain $M'$ started from $m$ which is run using the same sequence of geometric random variables $\{Z_i\}$ except that the $\ell_{i}$-th steps are censored for each $1 \le i \le k$, i.e.\ $M'_{t+1}=\max\{M'_t,Z'_{t+1}\}-1$ where $Z'_i=Z_{f(i)}$ where $f(i)$ is the $i$-th number when $\N\setminus S$ is arranged in increasing order. Then
$$\min _{t\in [\ell_k-k]} M'_t \geq \min_{t\in [\ell_k]} M_t.$$
\end{lemma}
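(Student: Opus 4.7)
My approach is to prove a pointwise comparison between the two chains along the natural indexing. Extend the definition of the map $f$ in the statement by setting $f(0)=0$, so that $f$ is a strictly increasing bijection from $\{0,1,\ldots,\ell_k-k\}$ onto $\{0,1,\ldots,\ell_k\}\setminus S$. The update rule for $M'$ then reads $M'_t=\max(M'_{t-1},Z_{f(t)})-1$. My plan is to show by induction on $t$ that $M'_t\geq M_{f(t)}$ for every $t\in\{0,1,\ldots,\ell_k-k\}$; once this is established, the conclusion of the lemma follows immediately by taking the minimum over $t$, since $f$ sends $\{1,\ldots,\ell_k-k\}$ into $\{1,\ldots,\ell_k\}$.

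For the induction, the base case $t=0$ is trivial from $M'_0=m=M_0$. For the inductive step, the key observation is that the indices in $\mathbb N$ strictly between $f(t-1)$ and $f(t)$, namely $f(t-1)+1,\ldots,f(t)-1$, all lie in the censored set $S$. By the lemma's hypothesis $M_{\ell_i}=M_{\ell_i-1}-1$, the chain $M$ decreases by exactly one at each of these intermediate steps, so $M_{f(t)-1}=M_{f(t-1)}-(f(t)-f(t-1)-1)\leq M_{f(t-1)}$. Combining the inductive hypothesis $M'_{t-1}\geq M_{f(t-1)}$ with this inequality and the monotonicity of the update map $x\mapsto \max(x,Z_{f(t)})-1$ gives $M'_t=\max(M'_{t-1},Z_{f(t)})-1\geq \max(M_{f(t-1)},Z_{f(t)})-1\geq \max(M_{f(t)-1},Z_{f(t)})-1=M_{f(t)}$, completing the induction.

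I do not anticipate any significant obstacle: the argument is a short induction once the right indexing and comparison are in place. The only subtle point worth emphasizing is that the lemma's hypothesis forces a clean ``staircase'' descent of $M$ across every maximal block of consecutive censored indices, and it is exactly this structure that lets one convert all the bookkeeping about censoring into the single pointwise inequality $M_{f(t)-1}\leq M_{f(t-1)}$; from there, the monotonicity of the Markov update rule does the rest.
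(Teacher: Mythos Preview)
Your proof is correct and uses essentially the same mechanism as the paper: the hypothesis forces $Z_{\ell_i}\le M_{\ell_i-1}$ so that censored steps are pure ``minus one'' moves, and then monotonicity of the update $x\mapsto\max(x,z)-1$ propagates the pointwise comparison. The only difference is packaging: the paper isolates the single-censoring comparison as a separate lemma (Lemma~\ref{l:censorbasic}) and then inducts on the number $k$ of censored indices, whereas you induct directly on the time index $t$ along the bijection $f$; both arguments unwind to the same chain of inequalities.
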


Lemma \ref{l:censor} is an immediate consequence of the following lemma using induction on $k$.

\begin{lemma}
\label{l:censorbasic}
In the set-up of Lemma \ref{l:censor}, suppose $Z_{1}\leq m_1\leq m_2$. Consider running two copies of the chain  $M$ and $M'$ with $M_0=m_1$ and $M'_0=m_2$. Let $M$ evolve using the sequence $\{Z_i\}_{i\geq 1}$ and $M'$ evolve using the sequence $\{Z_i\}_{i\geq 2}$. Then $M'_{t}\geq M_{t+1}$ for all $t$.  
\end{lemma}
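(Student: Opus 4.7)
The plan is to prove the inequality $M'_t \ge M_{t+1}$ by induction on $t$, exploiting the manifest monotonicity of the one-step update map $x \mapsto \max(x,z) - 1$ in the state variable $x$ (for each fixed value of the geometric $z$).

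For the base case $t=0$, I would use the hypothesis $Z_1 \le m_1$ to compute $M_1 = \max(m_1, Z_1) - 1 = m_1 - 1$, and then the inequality $M'_0 = m_2 \ge m_1 - 1$ follows directly from $m_1 \le m_2$.

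For the inductive step, the key observation is that after $M$'s first step, the two chains are synchronized in their consumption of the geometric sequence: both $M'_{t+1}$ and $M_{t+2}$ are obtained from the respective current states $M'_t$ and $M_{t+1}$ by applying the map $x \mapsto \max(x, Z_{t+2}) - 1$. Assuming the inductive hypothesis $M'_t \ge M_{t+1}$, the monotonicity of this map in $x$ immediately gives $\max(M'_t, Z_{t+2}) \ge \max(M_{t+1}, Z_{t+2})$, and hence $M'_{t+1} \ge M_{t+2}$.

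There is essentially no obstacle here; the entire argument is a one-line induction once the indexing is set up correctly. The only slightly delicate aspect is bookkeeping for the geometric variables — the role of the hypothesis $Z_1 \le m_1$ is precisely to guarantee that $M$'s initial step absorbs $Z_1$ without raising its state above $m_1$, so that $M$'s subsequent updates are driven by the exact same tail of geometrics $Z_2, Z_3, \ldots$ that drives $M'$, with $M$ simply running one step ahead.
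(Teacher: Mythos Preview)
Your proof is correct and follows essentially the same approach as the paper's: both establish the base case $M'_0 = m_2 \ge m_1 - 1 = M_1$ from $Z_1 \le m_1 \le m_2$, and then induct using the monotonicity of $x \mapsto \max(x, Z_{t+2}) - 1$ together with the fact that $M'_{t+1}$ and $M_{t+2}$ are driven by the same geometric $Z_{t+2}$.
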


\begin{proof}
Since $Z_1\leq m_1$, it follows that $M_1=m_1-1\leq m_2$. 
The result now follows by induction and the definitions of the chains. By definition, $M'_t = \max\{M'_{t-1},Z_{t+1}\} - 1$ and $M_{t+1}  = \max \{M_t, Z_{t+1}\} - 1$. Thus if $M'_{t-1} \ge M_t$, then $M'_{t}\geq M_{t+1}$.
\end{proof}

We are now ready to prove Lemma \ref{l:lb1}.

\begin{proof}[Proof of Lemma \ref{l:lb1}]
Observe that if $Y^{\downarrow}\geq k$, there must exist a sequence $\ell_1<\ell
_2<\cdots < \ell_k$ such that $(\ell_k, \ell_{k-1}, \ldots , \ell_{1})$ is a decreasing subsequence in $\Sigma$ and there does not exist $\ell_0<\ell_1$ such that that $\ell_0$ can be added to the sequence to make a longer decreasing subsequence. Let $\mathbf{l}=\{\ell_1 < \ell_2 <\cdots < \ell_{k}\}$ and $\mathbf{h}=\{h_1 > h_2> \cdots > h_{k}\}$. Let $\mathcal{A}_{\mathbf{l},\mathbf{h}}$ denote the event that $(\ell_k, \ell_{k-1}, \ldots , \ell_{1})$ is a decreasing subsequence of $\Sigma$ satisfying the above property that $\ell_1$ is as small as possible, and $Z_{\ell_i}=h_i$ for all $i$. Clearly 

\begin{equation}
\label{e:sum}
\P[Y^{\downarrow}\geq k] =\sum_{\mathbf{l}, \mathbf{h}} \P[\mathcal{A}_{\mathbf{l},\mathbf{h}}].
\end{equation}

Now, it is easy to observe using Lemma \ref{l:geom} that 
\begin{align}
\label{e:AEFD}
\mathcal{A}_{\mathbf{l},\mathbf{h}}\subseteq \mathcal{E}_{\ell_1}\cap \cf_{\mathbf{h}}\cap \cd_{h_1,h_k,\mathbf{l}}
\end{align}
where 

$$\mathcal{E}_{\ell_1}=\left\{\min_{t\in [\ell_1-1]} M_{t}>0\right\};$$
$$\cf_{\mathbf{h}}=\left\{ \forall i\in [k], \ Z_{\ell_i}=h_i\right\};$$
$$\cd_{h_1,h_k,\mathbf{l}}=\left\{M_{\ell_1}=h_1-1; \forall i\geq 2, \  M_{\ell_i}=M_{\ell_i-1}-1  ; \min_{\ell_1\le t <\ell_k} M_t>h_k-1\right\}.$$

Let $M'_t$ be the chain started at $h_1-1$ so that $M'_0=h_1-1$, $M'_{t}=\max\{Z'_{t},M'_{t-1}\}-1$ and $\{Z'_i\}$ is the sequence of geometrics restricted to $\{Z_i\}_{i=\ell_1+1}^\infty$ omitting the sequence $\{Z_{\ell_i}\}_{i=2}^k$. Let $\cg_{h_1,h_k,\mathbf{l}}$ denote the event that
$$\min_{t\in [\ell_k-\ell_1-k+1]}M'_{t} > h_{k}-1.$$
By Lemma \ref{l:censor},
\begin{align}
\cd_{h_1,h_k,\mathbf{l}} \subseteq \cg_{h_1,h_k,\mathbf{l}}.
\label{e:DinG}
\end{align}

Now observe that $\mathcal{E}_{\ell_1}$, $\cf_{\mathbf{h}}$ and $\cg_{h_1,h_k,\mathbf{l}}$ are independent. Combining equations \eqref{e:AEFD} and \eqref{e:DinG}, we have that

$$\P[\mathcal{A}_{\mathbf{l},\mathbf{h}}]\leq \P_{0}[R_0^{+}>\ell_1-1]\P_{h_1-1}[R_{h_k-1}^{+}>\ell_k-\ell_1-k+1] (q^{-1}(1-q))^{k}q^{\sum_{i} h_i}.$$
Using Proposition \ref{l:return} we have 
$$\P_{0}[R_0^{+}>\ell_1-1]\leq Ae^{-c(\ell_1-1)}$$
and
$$\P_{h_1-1}[R_{h_k-1}^{+}>\ell_k-\ell_1-k+1]\leq Ae^{-c(\max\{\ell_k-\ell_1-k-10(h_1-h_k),0\})}.$$

Observe that $\sum_{i=2}^{k}h_i \geq k(k-1)/2=k^2/2(1+o(1))$. Now we split the sum over $\mathbf{l}$ and $\mathbf{h}$ in the right hand side of \eqref{e:sum} into a few cases. Let $\mathcal{C}_1$ denotes the set of all $\mathbf{l}$ such that $\ell_k\leq k^{3/2}$. Then we have

\begin{eqnarray*}
\sum_{\mathbf{l}\in \mathcal{C}_1, \mathbf{h}} \P[A_{\mathbf{l}, \mathbf{h}}] &\leq & q^{k^2/2(1+o(1))}\sum_{h_{1}} \binom{k^{3/2}}{k}\binom{h_1}{k} q^{h_1}\\
&\leq & q^{k^2/2(1+o(1))}\sum_{h\geq k} h^kq^{h}=q^{k^2/2(1+o(1))}. 
\end{eqnarray*}

Let $\mathcal{C}_2$ denote the set of all $\mathbf{l}$ such that $
\ell_k>k^{3/2}$ and $\ell_1>\ell_k/2$. Then we have 

\begin{eqnarray*}
\sum_{\mathbf{l}\in \mathcal{C}_2, \mathbf{h}} \P[A_{\mathbf{l}, \mathbf{h}}] &\leq & q^{k^2/2(1+o(1))}\sum_{\ell_k\geq k^{3/2}}\sum_{h_{1}} \binom{\ell_k}{k}\binom{h_1}{k} e^{-c\ell_k /3} q^{h_1}\\
&\leq & q^{k^2/2(1+o(1))}\sum_{\ell \geq k^{3/2}} \sum_{h\geq k} \ell ^k e^{-c\ell /3} h^kq^{h}=q^{k^2/2(1+o(1))}. 
\end{eqnarray*}
To aid the reader attempting to verify the calculations, we note that above, as well as in the following estimate, we have not attempted to optimize the constant in the exponent of the bound.

Let $\mathcal{C}_3$ denote all the pairs $(\mathbf{l},\mathbf{h})$ such that $\mathbf{l}\notin \mathcal{C}_1\cup \mathcal{C}_2$ and $h_1<\ell_{k}/200$. Then we have 

\begin{eqnarray*}
\sum_{(\mathbf{l},\mathbf{h}) \in \mathcal{C}_3} \P[A_{\mathbf{l}, \mathbf{h}}] &\leq & q^{k^2/2(1+o(1))}\sum_{\ell_k\geq k^{3/2}}\sum_{h_{1}} \binom{\ell_k}{k}\binom{h_1}{k} q^{h_1} e^{-c\ell_k/10}\\
&\leq & q^{k^2/2(1+o(1))}\sum_{\ell\geq k^{3/2}} \sum_{h\geq k} \ell ^k e^{-c\ell /10} h^kq^{h}=q^{k^2/2(1+o(1))}. 
\end{eqnarray*}

Finally let $\mathcal{C}_4$ denote all the pairs $(\mathbf{l},\mathbf{h})$ such that $\mathbf{l}\notin \mathcal{C}_1\cup \mathcal{C}_2$ and $h_1\geq \ell_{k}/200$. In this case we have 

\begin{eqnarray*}
\sum_{(\mathbf{l},\mathbf{h}) \in \mathcal{C}_4} \P[A_{\mathbf{l}, \mathbf{h}}] &\leq & q^{k^2/2(1+o(1))}\sum_{h_{1}}\sum_{\ell_k\leq 200h_1} \binom{\ell_k}{k}\binom{h_1}{k} q^{h_1}\\
&\leq & q^{k^2/2(1+o(1))} \sum_{h\geq 200k^{3/2}} (200 h)^{k+1} h^kq^{h}=q^{k^2/2(1+o(1))}. 
\end{eqnarray*}

Combining these four cases we complete the proof of the lemma.
\end{proof}

\section{Concluding Remarks and Open questions}
\label{s:oc}
In this paper, based on a regenerative representation of the Mallows process and analysis of an associated Markov chain, we established some limit theorems for the lengths of longest increasing and decreasing subsequences of a $\mbox{Mallows}(q)$ permutation for a fixed $q\in (0,1)$. Many interesting open questions remain. We conclude with a discussion of a few of them.

\begin{enumerate}

\item For which regime of $q$ is the limiting distribution of $L_n$ Tracy-Widom? If $q\to 1$ sufficiently fast as $n\to \infty$ the limiting distribution is Tracy-Widom, but how fast does $(1-q)$ need to decay for this conclusion to hold? Does there exist a range of $q$ where the limiting distribution is neither Gaussian nor Tracy-Widom?

\begin{remark}
\label{r:tw}
Let us parameterize $q = 1-\delta$. For $\delta = o(n^{-2})$ it is straightforward to couple a ${\rm Mallows}(1)$ permutation and a ${\rm Mallows}(1-\delta)$ permutation to agree with high probability so that the total variation distance goes to 0 as $n \to \infty$. Clearly, in this case $L_n$ has the Tracy-Widom distribution when scaled appropriately. Our observation is that it is possible to improve the bound for the regime with Tracy-Widom limit to $\delta = o(n^{-4/3})$. For $q=1-o(n^{-4/3})$, using Lemma 4.2 of \cite{MueSta11}, it is possible to stochastically sandwich $L_{n}(q)$, between the length of LIS of two uniform random permutations of sizes $N_1(n)$ and $N_2(n)$, where $N_1$ and $N_2$ are such that both these when centered by $2\sqrt{n}$ and scaled by $n^{1/6}$ converges weakly to Tracy-Widom distribution.
\end{remark}

\item How does the variance of $L_n$ grow for different rates of $q\to 1$? There is a general linear upper bound on variance available from \cite{BhaPel15}. We expect the variance to go from linear in $n$ to the order of $n^{1/3}$ as $q\to 1$, but it would be interesting to understand the dependence on $q$.

\item Can one prove a law of large numbers for $L_n^{\downarrow}$ for some range of $q$ going to one? It is shown in \cite{BhaPel15} that $\E L_n^{\downarrow}(q)=\Theta (\sqrt{\log n/\log q^{-1}})$ for $q\to 1$ sufficiently slowly, but showing the existence and identification of a limiting constant remains open.

\end{enumerate}

\bibliographystyle{plain}
\bibliography{all}

\begin{thebibliography}{10}

\bibitem{AldDia95}
D.~J. Aldous and P.~Diaconis.
\newblock Hammersley's interacting particle process and longest increasing
  subsequences.
\newblock {\em Probabability Theory and Related Fields}, 103:199--213, 1995.

\bibitem{AldFil99}
D.~J. Aldous and J.~Fill.
\newblock Reversible {M}arkov chains and random walks on graphs.
\newblock {\em Draft at {\tt http://www.stat.Berkeley.edu/users/aldous }},
  1999.

\bibitem{Ans52}
F.~J. Anscombe.
\newblock Large-sample theory of sequential estimation.
\newblock In {\em Mathematical Proceedings of the Cambridge Philosophical
  Society}, volume~48, pages 600--607. Cambridge Univ Press, 1952.

\bibitem{BenCor11}
G{\'e}rard~Ben Arous, Ivan Corwin, et~al.
\newblock Current fluctuations for tasep: A proof of the pr{\"a}hofer--spohn
  conjecture.
\newblock {\em The Annals of Probability}, 39(1):104--138, 2011.

\bibitem{BaiDeiJoh99}
J.~Baik, P.~Deift, and K.~Johansson.
\newblock On the distribution of the length of the longest increasing
  subsequence of random permutations.
\newblock {\em Journal of the American Mathematical Society}, 12:1119--1178,
  1999.

\bibitem{BaiBenPec05}
Jinho Baik, G{\'e}rard Ben~Arous, and Sandrine P{\'e}ch{\'e}.
\newblock Phase transition of the largest eigenvalue for nonnull complex sample
  covariance matrices.
\newblock {\em Annals of Probability}, pages 1643--1697, 2005.

\bibitem{BaiRai01}
Jinho Baik, Eric~M Rains, et~al.
\newblock The asymptotics of monotone subsequences of involutions.
\newblock {\em Duke Mathematical Journal}, 109(2):205--281, 2001.

\bibitem{BenBerHofMos05}
I.~Benjamini, N.~Berger, C.~Hoffman, and E.~Mossel.
\newblock Mixing times of the biased card shuffling and the asymmetric
  exclusion process.
\newblock {\em Transactions of the American Mathematical Society},
  357(8):3013--3029, 2005.

\bibitem{BhaPel15}
N.~Bhatnagar and R.~Peled.
\newblock Lengths of monotone subsequences in a mallows permutation.
\newblock {\em Probability Theory and Related Fields}, 161(3-4):719--780, 2015.

\bibitem{BorDiaFul10}
Alexei Borodin, Persi Diaconis, and Jason Fulman.
\newblock On adding a list of numbers (and other one-dependent determinantal
  processes).
\newblock {\em Bulletin of the American Mathematical Society}, 47(4):639--670,
  2010.

\bibitem{Cap05}
P.~Caputo.
\newblock Energy gap estimates in {X}{X}{Z} ferromagnets and stochastic
  particle systems.
\newblock {\em Markov Processes and Related Fields}, 11:189--210, 2005.

\bibitem{ChaDey13}
S.~Chaterjee and P.S. Dey.
\newblock Central limit theorem for first-passage percolation time across thin
  cylinders.
\newblock {\em To appear in Probability Theory and Related Fields}, 2013.

\bibitem{Cri85}
D.~Critchlow.
\newblock {\em Metric Methods for analyzing partially ranked data}, volume~34
  of {\em Lecture Notes in Statistics}.
\newblock Springer Verlag, Berlin, 1985.

\bibitem{DeuZei95}
J.-D. Deuschel and O.~Zeitouni.
\newblock Limiting curves for i.i.d. records.
\newblock {\em Annals of Probability}, 23:852--878, 1995.

\bibitem{DeuZei99}
J.-D. Deuschel and O.~Zeitouni.
\newblock On increasing subsequences of i.i.d. samples.
\newblock {\em Combinatorics, Probability and Computing}, 8(3):247--263, 1999.

\bibitem{DeyJosPel16}
P.~Dey, M.~Joseph, and R.~Peled.
\newblock Longest increasing path within the critical strip.
\newblock {\em Preprint}, 2016.

\bibitem{Dia88}
P.~Diaconis.
\newblock {\em Group representations in probability and statistics}, volume~11
  of {\em Lecture Notes-Monograph Series}.
\newblock Institute of Mathematical Statistics, 1988.

\bibitem{DiaRam00}
P.~Diaconis and A.~Ram.
\newblock Analysis of systematic scan {M}etropolis algorithms using
  {I}wahori-{H}ecke algebra techniques.
\newblock {\em Michigan Mathematical Journal}, 48(1):157--190, 2000.

\bibitem{FigVer93}
M.~Figner and J.~Verducci.
\newblock {\em Probability models and statistical analyses for ranking data},
  volume~80 of {\em Lecture Notes in Statistics}.
\newblock Springer, New York, 1993.

\bibitem{GlaPel16}
A.~Gladkich and R.~Peled.
\newblock On the cycle structure of random {M}allows permutation.
\newblock {\em Preprint, personal communication}, 2016.

\bibitem{GneOls10}
A.~Gnedin and G.~Olshanski.
\newblock $q$-exchangeability via quasi-invariance.
\newblock {\em Annals of Probability}, 38:2103 â€“ 2135, 2010.

\bibitem{GneOls12}
A.~Gnedin and G.~Olshanski.
\newblock The two-sided infinite extension of the {M}allows model for random
  permutations.
\newblock {\em Advances in Applied Mathematics}, 48(5):615--639, 2012.

\bibitem{Gut06}
Allan Gut.
\newblock {\em Probability: A Graduate Course: A Graduate Course}.
\newblock Springer Science \& Business Media, 2006.

\bibitem{HouIsl14}
Christian Houdr{\'e} and {\"U}mit I{\c{s}}lak.
\newblock A central limit theorem for the length of the longest common
  subsequence in random words.
\newblock {\em arXiv preprint arXiv:1408.1559}, 2014.

\bibitem{John01}
I.M. Johnstone.
\newblock On the distribution of the largest eigenvalue in principal components
  analysis.
\newblock {\em Annals of statistics}, pages 295--327, 2001.

\bibitem{Knu70}
D.E. Knuth.
\newblock Permutations, matrices and generalized {Y}oung tableaux.
\newblock {\em Pacific Journal of Mathematics}, 34(3):709--727, 1970.

\bibitem{LebRaf02}
G.~Lebanon and J.~Lafferty.
\newblock Cranking: Combining rankings using conditional probability models on
  permutations.
\newblock In {\em ICML}, volume~2, pages 363--370. Citeseer, 2002.

\bibitem{LogShe77}
B.F. Logan and L.A. Shepp.
\newblock A variational problem for random {Y}oung tableaux.
\newblock {\em Advances in Mathematics}, 26:206--222, 1977.

\bibitem{Mal57}
C.~L. Mallows.
\newblock Non-null ranking models {I}.
\newblock {\em Biometrika}, 44(1-2):114--130, 1957.

\bibitem{Mar95}
J.~Marden.
\newblock {\em Analyzing and modeling ranking data}, volume~64 of {\em
  Monographs on Statistics and Applied Probability}.
\newblock Chapman \& Hall, London, 1993.

\bibitem{MueSta11}
C.~Mueller and S.~Starr.
\newblock The length of the longest increasing subsequence of a random
  {M}allows permutation.
\newblock {\em Journal of Theoretical Probability}, pages 1--27, 2011.

\bibitem{Muk13}
S.~Mukherjee.
\newblock Estimation of parameters in non-uniform models on permutations.
\newblock {\em Preprint arXiv:1307.0978}, 2013.

\bibitem{Rob38}
G.~de~B. Robinson.
\newblock On representations of the symmetric group.
\newblock {\em American Journal of Mathematics}, 60:745--760, 1938.

\bibitem{StaWal15}
Starr S. and M.~Walters.
\newblock Phase uniqueness for the {M}allows measure on permutations.
\newblock {\em \em Preprint}, 2015.
\newblock {\tt http://arxiv.org/abs/1502.03727v3}.

\bibitem{Sch61}
C.~Schensted.
\newblock Longest increasing and decreasing subsequences.
\newblock {\em Canadian Journal of Mathematics}, 13:179--191, 1961.

\bibitem{Ser09}
Richard Serfozo.
\newblock {\em Basics of applied stochastic processes}.
\newblock Springer Science \& Business Media, 2009.

\bibitem{Sta09}
S.~Starr.
\newblock Thermodynamic limit for the mallows model on {$S_n$}.
\newblock {\em Journal of Mathematical Physics}, 50(9):095208, 15 p., 2009.

\bibitem{VerKer77}
A.M. Vershik and S.V. Kerov.
\newblock Asymptotics of the {P}lancherel measure of the symmetric group and
  the limiting form of {Y}oung tableaux.
\newblock {\em Doklady Akademii Nauk SSSR(=Soviet Mathematics Doklady)},
  233(6):1024--1027, 1977.

\end{thebibliography}

\end{document}